\documentclass{amsart}
\pdfoutput=1

\usepackage{graphicx}
\usepackage{hyperref}

% For xy-pic
\usepackage[all]{xy}

% For subfigures
\usepackage{subfigure}   

% For diagonal dots

% For MetaPost figures inclusion
\DeclareGraphicsRule{*}{mps}{*}{} 

% Formatting enumeration
\renewcommand{\labelenumi}{(\arabic{enumi})}

\newcommand{\enumia}{\renewcommand{\labelenumi}{(\arabic{enumi})}}
\newcommand{\enumir}{\renewcommand{\labelenumi}{(\roman{enumi})}}

% Comment

% Theorems
%\theoremstyle{plain}
\newtheorem{theorem}{Theorem}

\newtheorem{cor}[theorem]{Corollary}
\newtheorem{lem}[theorem]{Lemma}
\newtheorem{prop}[theorem]{Proposition}

\newtheorem*{tilingconjecture}{Tiling Conjecture}
\newtheorem*{vtilingconjecture}{Virtual Tiling Conjecture}
\newtheorem*{theoremnonumber}{Theorem}
\newtheorem*{corollarynonumber}{Corollary}

\theoremstyle{definition}
\newtheorem{setting}[theorem]{Setting}
\newtheorem{definition}[theorem]{Definition}
\newtheorem{que}[theorem]{Question}
\newtheorem{rem}[theorem]{Remark}
\newtheorem{exmp}[theorem]{Example}

% Math Symbols
\def\co{\colon\thinspace}
\def\cobar{|\thinspace}

% mathroman
\newcommand{\cay}{\ensuremath{\mathrm{Cay}}}
\newcommand{\underw}{\ensuremath{\underline{w}}}
\newcommand{\hatw}{\ensuremath{\hat{w}}}

% blackboard bold
\newcommand{\bbb}[1]{\ensuremath{\mathbb{#1}}}
\newcommand{\R}{\bbb{R}}
\newcommand{\Z}{\bbb{Z}}

% mathcal
\newcommand{\script}[1]{\ensuremath{\mathcal{#1}}}
\def\DD{\script{D}}
\def\SS{{\script S}}

% smallcaps
\newcommand{\smallcaps}[1]{\textrm{\textsc{#1}}}
\newcommand{\aut}{\smallcaps{Aut}}

\newcommand{\link}{\smallcaps{Link}}

% custom footnote mark

\begin{document}
\title{Geometricity and Polygonality in Free Groups}
\author{Sang-hyun Kim}
\address{Department of Mathematics, the University of Texas at Austin}
\email{shkim@math.utexas.edu}

\begin{abstract}    
Gordon and Wilton recently proved that the double $D$ of a free group $F$ amalgamated along a cyclic subgroup $C$ of $F$ contains a surface group if a generator $w$ of $C$ satisfies a certain $3$--manifold theoretic condition, called virtually geometricity. Wilton and the author defined the polygonality of $w$ which also guarantees the existence of a surface group in $D$. In this paper, virtual geometricity is shown to imply polygonality up to descending to a finite-index subgroup of $F$. That the converse does not hold will follow from an example formerly considered by Manning.
\end{abstract}

\maketitle
\date{\today}
\subjclass[2000]{Primary 20F36, 20F65}
\keywords{Whitehead graph, subgroup separability}

\section{Introduction}\label{sec:introduction}
A \textit{surface group} (\textit{hyperbolic surface group}, respectively) 
means a group isomorphic to the fundamental group of a closed surface with non-positive (negative, respectively) Euler characteristic. Throughout this paper, we let $F$ be a finitely generated non-abelian free group, and $H$ be a handlebody so that $\pi_1(H)=F$.
Unless specified otherwise, we fix  $\SS=\{a_1,\ldots,a_n\}$ as a generating set for $F$.
Each element $w\in F$ can be written as a \textit{word} written in $\SS\cup\SS^{-1}$.
That means, $w=v_1 v_2\cdots v_l$ for some $v_i\in \SS\cup\SS^{-1}$. Each $v_i$ is called a \textit{letter} of $w$. $w$ is \textit{cyclically reduced} if $v_i\ne v_{i+1}^{-1}$, where the indices are taken modulo $l$. The length of a cyclically reduced word $w$ is notated as $|w|$.
The Cayley graph of $F$ with respect to the generating set $\SS$ is an infinite $2n$--valent tree, denoted as $\cay_\SS(F)$ or simply $\cay(F)$. There is a natural action of $F$ on $\cay(F)$, so that $\cay(F)/F$ is a bouquet of $n$ circles. Each circle in $\cay(F)/F$ inherits from $\cay(F)$ an orientation and a label by an element in $\SS$.

Motivated by $3$--manifold theory, Gromov conjectured every one-ended word-hyperbolic group contains a surface group~\cite{Bestvina:2009p3862}. Note that a surface subgroup of a word-hyperbolic group is actually a hyperbolic surface group.
We say that $U\subseteq F$ is \textit{diskbusting} if there does not exist a non-trivial free decomposition $F=G_1\ast G_2$ such that each element of $U$ is conjugate into one of $G_i$'s \cite{Canary:1993p4322,Stong:1997p4326,Stallings:1999p3173}. For convention, we will only consider finite subsets of $F$. As a special case, a word $w\in F$ is diskbusting if $w$ does not belong to any proper free factor of $F$. $w\in F$ is \textit{root-free} if $w$ is not a proper power. A particularly simple case of the Gromov's conjecture is when a word-hyperbolic group is given as the \textit{double} $D(w)= F\ast_{\langle w\rangle} F$. $D(w)$ is one-ended and word-hyperbolic if and only if $w$ is diskbusting and root-free~\cite{Bestvina:1992p456,Gordon:2009p360}. Gordon and Wilton ignited attention on this case by providing several sufficient conditions for $D(w)$ to contain a surface group~\cite{Gordon:2009p360}. On one hand, they formulated a homological condition by crucially using a result of Calegari on surface subgroups of word-hyperbolic graphs of free groups with cyclic edge groups~\cite{Calegari:2008p1810}. On the other hand, they considered a $3$--manifold theoretic condition as follows. Realize $U\subseteq F=\pi_1(H)$ as an embedded $1$--submanifold $A\subseteq H$. $U$ (or equivalently, $A$) is said to be \textit{virtually geometric} if there exists a finite cover $p:H'\rightarrow H$ such that $p^{-1}(A)$ is freely homotopic to a $1$--submanifold embedded in $\partial H'$. In particular, $U$ (or equivalently, $A$) is \textit{geometric} if $A$ is freely homotopic to a $1$--submanifold on $\partial H$. 
%We simply say $w\in F$ is \textit{virtually geometric}, if so is $\{w\}$. 
If $w\in F$ is virtually geometric and diskbusting, then $D(w)$ contains a surface group~\cite{Gordon:2009p360}. However, not all root-free diskbusting words are virtually geometric~\cite{Manning:2009p3177}.

Let $w\in F = \pi_1(\cay(F)/F)$ be cyclically reduced.
Following \cite{Stallings:1983p596}, a locally injective graph map
is called an \textit{immersion}.
There exists an immersed loop $\gamma\subseteq \cay(F)/F$ such that $[\gamma]=w$;
in this case, we say that $\gamma$ \textit{reads} $w$.
Let $X(w)$ denote the 2--dimensional CW-complex obtained by taking two copies of $\cay(F)/F$ 
and gluing each boundary component of a cylinder along each copy of $\gamma_w\subseteq \cay(F)/F$.
In the case when the free basis $\SS$ needs to be explicitly notated, we write
$X_\SS(w)=X(w)$.
It turns out that $X(w)$ is an Eilenberg--Mac Lane space for $D(w)$~\cite{Wise:2000p790,Gordon:2009p360}.

In~\cite{Kim:2009p3867},
Wilton and the author defined a combinatorial condition for a cyclically reduced word $w$, 
called \textit{polygonality}; see Definition~\ref{defn:polygonal}.
A key observation was, the polygonality of $w$ is equivalent to the existence of a homeomorphically and $\pi_1$--injectively embedded closed surface of non-positive Euler characteristic in some finite cover of $X(w)$.
Hence, the polygonality of $w$ will guarantee the existence of a surface group in $D(w)$.

We will use the following terminology that appears in~\cite{Kim:2009p3867}.
A \textit{polygonal disk} $P$ is a $2$--dimensional closed disk with a CW--structure such that $\partial P = P^{(1)}$ is a polygon.
Let $E(\partial P)$ denote the set of the edges in $\partial P$.
A \textit{side-pairing} $\sim$ on a collection of polygonal disks $P_1,\ldots,P_m$ is 
a partition of $\coprod_i E(\partial P_i)$ into unordered pairs,
along with a choice of a homeomorphism between the edges in each pair;
here, we require that such a homeomorphism does not identify two consecutive edges of any $\partial P_i$ 
in a way that fixes a common vertex of the two edges. 
A side-pairing $\sim$ on polygonal disks $P_1,\ldots,P_m$ determines an identification of the edges of $\coprod_i P_i$; 
hence, we have a closed surface $S=\coprod_i P_i/\!\!\sim$. 
We denote by $m(S)$ the number of polygonal disks ($2$--cells) which $S$ is made of.
If $\phi\co\Gamma\rightarrow\cay(F)/F$ is a graph map,
each $e\in E(\Gamma)$ carries an orientation and a label (by $\SS$)
induced from the orientation and the label of $\phi(e)\in E(\cay(F)/F)$.
Conversely, for a graph $\Gamma$, 
a choice of an orientation and a label (by $\SS$) for each $e\in E(\Gamma)$ 
determines a graph map $\phi\co\Gamma\rightarrow\cay(F)/F$;
if $e\in E(\Gamma)$ is labeled by $a_i$, we call $e$ as an $a_i$-edge of $\Gamma$.
That $\phi\co\Gamma\rightarrow\cay(F)/F$ is an immersion is equivalent to that
for each $a_i\in\SS$ and each $v\in\Gamma^{(0)}$,
there do not exist two incoming $a_i$-edges or two outgoing $a_i$-edges at $v$.
We say that $U\subseteq F$ is \textit{independent} if for any two distinct $w_1,w_2\in U$ and for any $t_1,t_2\in\Z\setminus\{0\}$,
$w_1^{t_1}$ is not conjugate to $w_2^{t_2}$.
The definition of polygonality generalized to an independent set of cyclically reduced words is as follows;
see \cite{Kim:2009p3867} for the case when $|U|=1$.

\begin{definition}\label{defn:polygonal}
Let $U\subseteq F$ be an independent set of cyclically reduced words.
\begin{enumerate}
\item
Suppose there exists a side-pairing $\sim$ on polygonal disks $P_1,\ldots,P_m$
and an immersion $\phi\co S^{(1)}\rightarrow\cay(F)/F$ where $S=\coprod_i P_i/\!\!\sim$,
such that the composition $\partial P_i\rightarrow S^{(1)}\rightarrow\cay(F)/F$
reads a non-trivial power of some word in $U$.
Then the closed surface $S$ is called a \textit{$U$--polygonal surface}.
\item
$U$ is \textit{polygonal} 
if either 
$U$ contains a proper power or there exists a
$U$--polygonal surface $S$ such that $\chi(S) < m(S)$.
\end{enumerate}
\end{definition}

\begin{rem}\label{rem:defn:polygonal}
Let $U\subseteq F$ be a set of cyclically reduced words. 
The condition $\chi(S)<m(S)$  is necessary for the polygonality to be a non-trivial property, for one can always construct a $U$--polygonal surface $S$ with $\chi(S)=m(S)$ as follows.
Choose any non-trivial word $w\in U$ and take two polygonal disks $P$ and $P'$ 
each of whose boundary components reads $w$.
For $e\in E(\partial P)$ and $e'\in E(\partial P')$, define $e\sim e'$ if and only if $e$ and $e'$ read the same letter of $w$.
Then for $S=P\coprod P'/\!\!\sim$, $\chi(S)=m(S)=2$. One can also construct a $U$--polygonal surface $S= P/\!\!\sim$ with $\chi(S)=m(S)=1$ by taking a polygonal disk $P$ whose boundary reads $w^2$ and by defining a side-pairing $\sim$ to identify  the edges of $\partial P$ with respect to the $\pi$--rotation.
\end{rem}

The polygonality of $U\subseteq F$ depends on the choice of a free basis $\SS$ for $F$ in which $U$ is written.
For $g,h\in F$, we let $g^h$ denote $h^{-1}g h$.
Choose  a finite-index subgroup  $F'$ of $F$ and a free basis  $\SS'$  for $F'$.
For $w'\in F'$, let us denote the $F'$--conjugacy class of $w'$ by  $[w']$.
Fix a word $w\in F$.
For $g\in F$,
let $n_g$ be the smallest positive integer such that $(w^{n_g})^g\in F'$.
Following~\cite{Gordon:2009p360,Manning:2009p3177},
define \[\underw_{F'} = \{ [(w^{n_g})^g] \cobar gF'\in F/F'\}.\]
Note that $|\underw_{F'}|\le |F/F'|$.
We define a \textit{transversal $\hatw_{F'}$ for $\underw_{F'}$} to be a subset of $F'$ obtained by
choosing exactly one element from each conjugacy class in $\underw_{F'}$;
here, we will regard $\hatw_{F'}$ as a set of cyclically reduced words written in $\SS'$,
by taking cyclic conjugations if necessary. 
Suppose for some $g$ and $h$ in $F$, $(w^{n_g})^g$ and $(w^{n_h})^h$ have non-trivial powers which are conjugate to each other in $F'$.
Since $F$ does not have any non-trivial Baumslag--Solitar relation,
we can write $(w^M)^g = (w^{M})^{hf'}$ for some $M>0$ and $f'\in F'$.
We have $hf'g^{-1}\in C(w^{M})=C(w)$ and so, $w^{hf'} = w^g$.
In particular, $n_g=n_h$ and $[(w^{n_g})^g] = [(w^{n_h})^h]$. This shows that $\hatw_{F'}$ is independent as a set of words in $F'$.

\begin{definition}\label{defn:vp}
A word $w\in F$ is \textit{virtually polygonal} if 
a transversal $\hatw_{F'}$ for $\underw_{F'}$ is polygonal as 
an independent set of cyclically reduced  words written in $\SS'$,
for some finite-index subgroup $F'$ of $F$ and for some free basis $\SS'$ of $F'$.
\end{definition}

%In this paper, we explore the relation between (virtual) geometricity and (virtual) polygonality.
%We say that $U,U'\subseteq F$ are \textit{equivalent}
%if $U'=\phi(U)$ for some $\phi\in \aut(F)$.
After formulating polygonality in terms of Whitehead graphs,
we will prove a relation between virtual geometricity and virtual polygonality.

%\begin{thmgp}
%Suppose $U\subseteq F$ is an independent, diskbusting set of root-free, cyclically reduced words.
%If $U$ is geometric, then $U$ is equivalent to a polygonal set of words in $F$.
%\end{thmgp}
 
\begin{theoremnonumber}[Theorem~\ref{thm:vg implies vp}]
A diskbusting and virtually geometric word in $F$ is virtually polygonal.
\end{theoremnonumber}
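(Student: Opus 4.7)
The strategy is to use the finite cover $p\co H' \to H$ witnessing the virtual geometricity of $w$ to build a $\hatw_{F'}$-polygonal surface directly out of the boundary surface $\partial H'$. I would set $F' = \pi_1(H')$, let $A' \subset \partial H'$ be a $1$-submanifold freely homotopic to $p^{-1}(A)$, and choose a free basis $\SS'$ for $F'$ compatible with a handle decomposition of $H'$ (one $0$-handle and one $1$-handle per generator). The components of $A'$, read with the labeling induced by the handles, are then precisely the words of some transversal $\hatw_{F'}$, up to positive powers and cyclic conjugation.

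The chosen handle decomposition gives $\partial H'$ a natural cell structure: each $1$-handle contributes an annulus, labeled by the corresponding generator of $\SS'$, and the $0$-handle contributes a planar region. After isotoping $A'$ to a taut position so that its intersection with each annulus is a family of parallel essential arcs, cutting $\partial H'$ along $A'$ yields complementary regions $R_1, \ldots, R_m$. Provided each $R_i$ is a disk, $\partial H'$ itself --- viewed as $\coprod_i R_i/{\sim}$, with $\sim$ identifying boundary arcs on opposite sides of each component of $A'$ --- is a $\hatw_{F'}$-polygonal surface: the boundary $\partial R_i$ reads a positive power of some word in $\hatw_{F'}$ by construction, and the immersion property of the induced map $(\partial H')^{(1)} \to \cay_{\SS'}(F')/F'$ follows from the embeddedness of $A'$ on $\partial H'$. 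The required inequality $\chi(\partial H') < m$ comes essentially for free: since $F'$ is a finite-index subgroup of the non-abelian free group $F$, its rank, and hence the genus $g'$ of $\partial H'$, is at least $2$, so $\chi(\partial H') = 2 - 2g' \le -2 < m$.

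The main obstacle is ensuring that each complementary region $R_i$ is genuinely a disk, and this is where I expect the Whitehead-graph reformulation of polygonality promised in the introduction to enter the proof in an essential way. I would translate the diskbusting hypothesis on $w$, which descends to $\hatw_{F'}$ inside $F'$, into a structural condition on the Whitehead graph of $\hatw_{F'}$ with respect to $\SS'$, and then combine this with the geometric embedding of $A'$ on $\partial H'$ to rule out non-disk complementary regions --- passing, if necessary, to a further finite cover of $H'$ in which the lifted $A'$ fills its boundary. Once this is arranged, the resulting closed surface is a $\hatw_{F'}$-polygonal surface of the required type, and the virtual polygonality of $w$ follows.
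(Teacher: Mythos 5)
There is a genuine gap, and it sits exactly where you located the ``main obstacle'': the complementary regions of $A'$ in $\partial H'$ can \emph{never} all be disks, and no further finite cover can fix this. Since $A'$ is an embedded closed $1$--submanifold of the closed surface $\partial H'$, each component of $A'$ contributes $0$ to the Euler characteristic, so if $\partial H'\setminus A'$ consisted of $m$ open disks we would get $\chi(\partial H')=m\ge 1$, contradicting your own (correct) observation that $\chi(\partial H')=2-2g'\le -2$. Thus the inequality you invoke as coming ``for free'' is in fact an obstruction to your construction, not a verification of $\chi(S)<m(S)$. Passing to a further finite cover does not help: the preimage of $A'$ is again a disjoint union of embedded circles, subject to the same count. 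The same problem appears combinatorially: if the construction did go through, the $1$--skeleton of $S$ would be $A'$ itself, every vertex would have valence $2$, every vertex link would be a bigon, and one would land exactly in the degenerate case $\chi(S)=m(S)$ excluded by Definition~\ref{defn:polygonal} (compare Remark~\ref{rem:defn:polygonal}).

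The paper's proof avoids this by never cutting $\partial H'$ along $A'$. After reducing to a minimal Whitehead graph via Zieschang's theorem (Theorem~\ref{thm:geometric}), it replaces $A'$ by the \emph{double} $B=\partial N(A')=A'+A'$ --- this is why polygonality and simple surgery are formulated ``weakly,'' for $f(A)$ with a polynomial $f$ rather than for $A$ itself --- and works with the planar embedding of the Whitehead graph $\Gamma=W(A',\DD')$ in $S^2$, not with the complement of $A'$ in $\partial H'$. Diskbusting plus minimality give, via Theorem~\ref{thm:stallings} and Lemma~\ref{lem:graph}, that the regions of $S^2\setminus\Gamma$ are disks bounded by simple cycles; these cycles become the vertex links of an \emph{abstract} polygonal surface whose $2$--cells are bounded by the components of $B$ (Propositions~\ref{prop:ss}, \ref{prop:g implies ss} and~\ref{prop:p and ss}), and the inequality $\chi(S)<m(S)$ follows from an Euler characteristic count on $S^2$ (not all regions can be bigons unless $|V(\Gamma)|=2$). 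Your instinct to bring in the Whitehead graph and the diskbusting hypothesis is the right one, but it must be applied to the complementary regions of the Whitehead graph in the sphere, with $A'$ doubled, rather than to the complementary regions of $A'$ in $\partial H'$.
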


While Theorem~\ref{thm:vg implies vp} is interesting in its own, it also has a corollary related to the Gromov's conjecture on $D(w)$. 

%\begin{thmvp}
%If a root-free word $w\in F$ is virtually polygonal, then $D(w)$ contains a hyperbolic surface group.
%\end{thmvp}

\begin{theoremnonumber}[Theorem~\ref{thm:vp}]
If a root-free word $w\in F$ is virtually polygonal, then $D(w)$ contains a hyperbolic surface group.
\end{theoremnonumber}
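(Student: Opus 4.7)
The plan is to translate virtual polygonality of $w$ into ordinary polygonality of the transversal $U:=\hatw_{F'}$, apply the Kim--Wilton equivalence between polygonality and injectively embedded surfaces in covers, and pull the resulting surface subgroup back to $D(w)$ via an explicit covering identification.

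Fix $F'\le F$ of finite index and basis $\SS'$ witnessing virtual polygonality. A $U$-polygonal surface pulls back along the cover $\cay(F')/F''\to\cay(F')/F'$ for any finite-index $F''\le F'$, so polygonality passes to finite-index subgroups; I may therefore replace $F'$ by its normal core in $F$ and assume $F'\triangleleft F$, in which case the integer $n_g$ is independent of $g$, equal to the order $n_e$ of $wF'$ in $F/F'$. The two quotient maps $F\twoheadrightarrow F/F'$ on the vertex groups of $D(w)=F\ast_{\langle w\rangle}F$ agree on $\langle w\rangle$ and thus amalgamate into a surjection $\phi\co D(w)\twoheadrightarrow F/F'$. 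A Bass--Serre analysis of $K:=\ker\phi$ acting on the Bass--Serre tree of $D(w)$ identifies the cover of $X(w)$ associated with $K$ as precisely $X_{\SS'}(U)$: two copies of the $F'$-cover $\tilde H$ of $H$ joined by $[F:F']/n_e$ cylinders reading the elements of $U$. In particular $\pi_1(X_{\SS'}(U))\cong K\hookrightarrow D(w)$.

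Root-freeness of $w$ forces $U$ to contain no proper power in $F'$: if $(w^{n_g})^g=u^k$ with $k\ge 2$ and $u\in F'$, unique extraction of roots in the free group $F$ would make $u$ a $g$-conjugate of $w^{n_g/k}$, so that $w^{n_g/k}$ already conjugates into $F'$ at $gF'$, contradicting the minimality of $n_g$. Hence the polygonality hypothesis furnishes a $U$-polygonal surface $S$ with the strict inequality $\chi(S)<m(S)$, and the set-valued analogue of the Kim--Wilton equivalence---available from the Whitehead-graph reformulation of polygonality developed earlier in the paper---yields a homeomorphically and $\pi_1$-injectively embedded closed surface $\Sigma$ in some finite cover of $X_{\SS'}(U)$, hence in a finite cover of $X(w)$. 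Thus $\pi_1(\Sigma)$ is a surface subgroup of $D(w)$.

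The principal obstacle is to verify that $\Sigma$ is hyperbolic, since the equivalence a priori only guarantees $\chi(\Sigma)\le 0$. To rule out the torus and Klein-bottle cases it suffices to show that $D(w)$ contains no $\Z^2$. Any $\Z^2\le D(w)$ must, by Bass--Serre theory, either lie in a vertex group (impossible, as $F$ is free) or contain a hyperbolic element $t$ whose axis $A$ is preserved by the commuting generator $s$; if $s$ is elliptic, it fixes $A$ pointwise and lies in every edge stabilizer along $A$. Root-freeness of $w$ makes $\langle w\rangle$ maximal cyclic in $F$, so two distinct edge stabilizers meeting at a vertex of $A$ intersect trivially in the ambient free group; constancy of edge stabilizers along all of $A$ would place $t$ in the normalizer $N_{D(w)}(\langle w\rangle)$, which an amalgam normal-form argument shows to consist entirely of elliptic elements, contradicting the hyperbolicity of $t$. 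The case of $s$ hyperbolic reduces to the elliptic one by replacing $s$ with an appropriate $s^at^{-b}$. Hence $\pi_1(\Sigma)$ contains no $\Z^2$, so $\chi(\Sigma)<0$, and $\pi_1(\Sigma)$ is the desired hyperbolic surface subgroup of $D(w)$.
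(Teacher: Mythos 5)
Your proof is correct in substance, and its core coincides with the paper's: identify $X_{\SS'}(\hatw_{F'})$ with a finite cover of $X_{\SS}(w)$ and feed the polygonal transversal into Theorem~\ref{thm:polygonal}. The differences are in the packaging. The paper never normalizes $F'$: it builds the cover $Y(w,F')$ of $X_\SS(w)$ directly from the elevations of $\gamma_w$ to $\cay(F)/F'$ and observes that the homotopy equivalence $\cay(F)/F'\to\cay_{\SS'}(F')/F'$ carries it to $X_{\SS'}(\hatw_{F'})$; this works for arbitrary finite-index $F'$ and so avoids the one genuinely under-justified step in your argument, namely that polygonality of $\hatw_{F'}$ descends to $\hatw_{F''}$ for the normal core $F''$. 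That descent is true, but not for the reason you give --- a $U$-polygonal surface does not literally ``pull back'' along $\cay(F')/F''\to\cay(F')/F'$, since only the immersed $1$-skeleton pulls back as a fiber product and the $2$-cells must be reassembled separately; the clean justification is via Remark~\ref{rem:polygonal} (an embedded closed hyperbolic surface in a finite cover lifts homeomorphically to any further finite cover, and $Y(\hatw_{F'},F'')\simeq X_{\SS''}(\hatw_{F''})$). Either supply that argument or, better, drop the normalization altogether. On the plus side, your verification that the transversal consists of root-free words is a worthwhile point the paper leaves implicit, and Theorem~\ref{thm:polygonal} does require it. Finally, the entire $\Z^2$-exclusion paragraph is unnecessary: Definition~\ref{defn:polygonal} demands the strict inequality $\chi(S)<m(S)$, and the surface produced in the proof of Theorem~\ref{thm:polygonal} has $\chi(S')=2(\chi(S)-m(S))<0$, so hyperbolicity comes for free; your Bass--Serre normalizer argument is correct (it is essentially the malnormality of the maximal cyclic subgroup $\langle w\rangle$ in $F$) but a much longer detour.
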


\begin{corollarynonumber}[Corollary~\ref{cor:vp}; first proved by Gordon--Wilton~\cite{Gordon:2009p360}]
	If $w\in F$ is root-free, virtually geometric and diskbusting, then $D(w)$ contains a hyperbolic surface group.
\end{corollarynonumber}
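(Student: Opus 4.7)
The plan is to deduce the corollary as an immediate consequence of Theorem~\ref{thm:vg implies vp} and Theorem~\ref{thm:vp}, which I am allowed to invoke. No new construction is needed at the level of the corollary; everything is packaged into the two preceding theorems, and I only need to verify that the hypotheses chain correctly.

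Concretely, I would proceed in two steps. First, since by assumption $w\in F$ is diskbusting and virtually geometric, Theorem~\ref{thm:vg implies vp} applies verbatim and yields that $w$ is virtually polygonal in the sense of Definition~\ref{defn:vp}; that is, there exist a finite-index subgroup $F'\le F$ and a free basis $\SS'$ for $F'$ such that some transversal $\hatw_{F'}$ for $\underw_{F'}$ is polygonal when written in $\SS'$. Second, I feed this output into Theorem~\ref{thm:vp}: since $w$ is additionally root-free by hypothesis, Theorem~\ref{thm:vp} gives that $D(w)=F\ast_{\langle w\rangle}F$ contains a hyperbolic surface group. Concatenating the two implications yields the corollary.

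The only check I would want to make explicit is that root-freeness of $w$ in $F$ is the correct form of the hypothesis needed by Theorem~\ref{thm:vp}, which is already how Theorem~\ref{thm:vp} is stated; no passage between $w$ and any of its conjugate roots in $F'$ is required here, because the root-free hypothesis is used at the level of $F$ to rule out Baumslag--Solitar-type phenomena in $D(w)$ and to ensure that $D(w)$ is word-hyperbolic and one-ended.

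The real obstacles, of course, lie in the two theorems being quoted, not in the corollary itself: the substantive work is in the Whitehead-graph reformulation of polygonality used to prove Theorem~\ref{thm:vg implies vp} (turning a boundary-parallel $1$--submanifold in a finite cover of $H$ into a side-paired collection of polygonal disks satisfying $\chi(S)<m(S)$) and in Theorem~\ref{thm:vp} (promoting a virtually polygonal word to an actual $\pi_1$--injective surface in a finite cover of $X(w)$, and arguing hyperbolicity of the resulting surface group using root-freeness). Once those are in place, the corollary is a one-line diagram chase through the hypotheses.
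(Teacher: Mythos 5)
Your proposal is correct and is exactly the paper's argument: the corollary is stated with no written proof precisely because it is the immediate concatenation of Theorem~\ref{thm:vg implies vp} (diskbusting $+$ virtually geometric $\Rightarrow$ virtually polygonal) with Theorem~\ref{thm:vp} (root-free $+$ virtually polygonal $\Rightarrow$ $D(w)$ contains a hyperbolic surface group). Your check that the root-free hypothesis enters only through the statement of Theorem~\ref{thm:vp} is also consistent with how the paper packages things.
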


In~\cite{Manning:2009p3177}, Manning proved that the word $w_1=bbaaccabc\in F_3=\langle a,b,c\rangle$ is not virtually geometric. The same argument shows that $w_2 =aabbacbccadbdcdd \in F_4=\langle a,b,c,d \rangle$ is not virtually geometric. We will prove that $w_1$ and $w_2$ are both polygonal (Proposition~\ref{prop:w1w2}). As a consequence, $D(w_1)$ and $D(w_2)$ contain surface groups.

\begin{theoremnonumber}[Theorem \ref{thm:p but not vg}]
	There exist polygonal words which are not virtually geometric.
\end{theoremnonumber}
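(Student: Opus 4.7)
The strategy is to produce explicit witnesses. Manning has already established that $w_1 = bbaaccabc \in F_3$ is not virtually geometric, and essentially the same argument will handle $w_2 = aabbacbccadbdcdd \in F_4$. The theorem therefore reduces to establishing the polygonality of $w_1$ and $w_2$, which is the content of Proposition~\ref{prop:w1w2}. In particular, the present theorem is a formal consequence of that proposition, once its proof is in hand.

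To prove that $w_1$ is polygonal, the goal is to exhibit a $\{w_1\}$--polygonal surface $S$ satisfying $\chi(S) < m(S)$. Concretely, I would search for a collection of polygonal disks $P_1, \ldots, P_m$, each of whose boundary reads a non-trivial power of $w_1$, together with a side-pairing $\sim$ on $\coprod_i E(\partial P_i)$ so that the induced labeling map $\phi \co S^{(1)} \to \cay(F)/F$ is an immersion. The pairing must respect labels (an $a$--edge pairs with an $a$--edge, etc.) and orientations, and the immersion condition translates to the combinatorial requirement that at every vertex $v \in S^{(1)}$ no two incident edges are both incoming $a_i$--edges or both outgoing $a_i$--edges. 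This condition is naturally encoded in the Whitehead graph of $w_1$ with respect to $\SS = \{a,b,c\}$, so the search for a good side-pairing is guided by the local structure of that graph.

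The principal obstacle is locating such a configuration by hand: it is a finite but delicate combinatorial search, and in the absence of a systematic procedure one must rely on the Whitehead graph and the failure of virtual geometricity (which by Manning's analysis comes from specific vertices where no legitimate tangential surgery is available) to suggest where disks should meet and how their boundaries should be identified. Once candidate disks and a candidate pairing are fixed, the verification is mechanical: one computes the vertex, edge, and face counts of $S$, confirms $\chi(S) < m(S)$, and inspects each vertex of $S^{(1)}$ to check the no-two-incoming and no-two-outgoing condition for each label.

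The construction for $w_2$ should then proceed analogously, perhaps by mimicking the pattern of disks and identifications discovered for $w_1$ in the enlarged alphabet $\{a,b,c,d\}$. The payoff is twofold. First, producing such $S$ separates polygonality from virtual geometricity, proving the theorem. Second, by Theorem~\ref{thm:vp} the polygonality of $w_1$ and $w_2$ directly yields hyperbolic surface subgroups of $D(w_1)$ and $D(w_2)$, recovering a result that the Gordon--Wilton virtual-geometricity criterion from~\cite{Gordon:2009p360} cannot reach.
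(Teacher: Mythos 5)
Your overall strategy is exactly the paper's: take $w_1=bbaaccabc$ and $w_2=aabbacbccadbdcdd$ as witnesses, invoke Manning's criterion (Theorem~\ref{thm:manning}, applied to $W(w_1)=K_{3,3}$ and $W(w_2)=K_{4,4}$) for the failure of virtual geometricity, and reduce the theorem to the polygonality of these two words. The framework you describe for proving polygonality --- a labeled, oriented side-pairing on polygonal disks whose boundaries read powers of the word, with the immersion condition checked vertex-by-vertex and the inequality $\chi(S)<m(S)$ verified by an Euler characteristic count --- is also precisely the paper's framework.

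However, there is a genuine gap, and you have in effect flagged it yourself: you never exhibit the side-pairing. The entire mathematical content of the paper's proof of Proposition~\ref{prop:w1w2} is the explicit combinatorial witness, and a description of what such a witness would look like, together with the observation that finding one is ``a finite but delicate combinatorial search,'' does not constitute a proof that one exists. For the record, the paper's witnesses are quite economical: for $w_1$ a \emph{single} polygonal disk $P_1$ whose boundary reads $w_1^2$ (so $18$ edges), with the nine pairs $\{1,2\}$, $\{3,7\}$, $\{4,16\}$, $\{5,15\}$, $\{6,18\}$, $\{8,11\}$, $\{9,14\}$, $\{10,17\}$, $\{12,13\}$, giving a surface $S_1$ with $4$ vertices, $9$ edges and $1$ face, hence $\chi(S_1)=-4<1=m(S_1)$; and for $w_2$ a single disk whose boundary reads $w_2$ itself, paired as $\{1,2\}$, $\{3,12\}$, $\{4,7\}$, $\{5,10\}$, $\{6,14\}$, $\{8,9\}$, $\{11,16\}$, $\{13,15\}$, giving $\chi(S_2)=-3<1$. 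In each case one must still check, link by link, that no vertex of $S^{(1)}$ has two incoming or two outgoing edges with the same label. Until you produce and verify such data (or an equivalent certificate, e.g.\ a decomposition of $W(w_1^2)$ and $W(w_2)$ into simple cycles satisfying the conditions of Proposition~\ref{prop:ss}), the claim that $w_1$ and $w_2$ are polygonal is unproven, and with it the theorem. A secondary, smaller point: you assert that ``essentially the same argument'' handles $w_2$ without checking that $W(w_2)$ actually is $K_{4,4}$; that verification is short but should be done.
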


\section{Preliminary}\label{sec:preliminary}
We recall basic facts on polygonality and Whitehead graphs.
The material in Section~\ref{subsec:polygonality} is largely drawn from~\cite{Kim:2009p3867}.
For readers' convenience, we include a complete proof of Theorem~\ref{thm:polygonal}
which is similar to the argument in~\cite[Corollary 2.11]{Kim:2009p3867}.

\subsection{Polygonality}\label{subsec:polygonality}
We say that $U,U'\subseteq F$ are \textit{equivalent} if $U'=\phi(U)$ for some $\phi\in \aut(F)$.
The definition of polygonality of $U\subseteq F$ involves the representation of $U$ as a set of words written in $\SS=\{a_1,\ldots,a_n\}$~(Definition~\ref{defn:polygonal}). In particular, $U\subseteq F$ is equivalent to a polygonal set of words if and only if $U\subseteq F$ is polygonal with respect to some choice of a free basis for $F$. We list some of the known polygonal words as follows.
\begin{exmp}\label{exmp:polygonal}
(1)	Consider a cyclically reduced word $w\in F = \langle a_1,\ldots,a_n\rangle$ 
	such that for each $i$, exactly two letters of $w$ belong to $\{a_i,a_i^{-1}\}$.
	In particular, $|w|=2n$.
	We claim that $w$ is polygonal.
	We may assume $w$ is root-free, as any proper powers are polygonal by definition.
	Let $P$ be a polygonal disk
	with each edge oriented and labeled by $\SS$
	such that $\partial P$ reads $w$.
 	Let $\sim$ be the side-pairing which identifies the
	two $a_i$-edges of $\partial P$ for each $i$.
	Put $S=P/\!\!\sim$. Since $S^{(1)}$ has exactly one $a_i$-edge for each $i$,
	$S^{(1)}$ immerses into $\cay(F)/F$.
	In order to prove $w$ is polygonal, we have only to show that $\chi(S)<m(S)=1$.
	For each $v\in S^{(0)}$, 
	let $d_v$ denote the valence (of the $1$--complex $S^{(1)}$) at $v$.
	As $w$ is cyclically reduced, $d_v\ge2$ for each $v$. Moreover,
	$\chi(S)=|S^{(0)}|-|S^{(1)}|+1 = \sum_{v\in S^{(0)}}1 - \sum_{v\in S^{(0)}} d_v/2 + 1$.
	Hence, $\chi(S)<1$ fails only when $d_v=2$ for each $v\in S^{(0)}$.
	If $d_v=2$ for each $v$, then one can see that $S\approx\R P^2$ and $\sim$ identifies the edges of $\partial P$ by the $\pi$--rotation; this means, $\sim$ identifies $v_i$ to $v_{(|w|/2)+i}$ for each $i$. 
	It follows that $w=u^2$ for some $u\in F$, and we have a contradiction.

(2)
	Let $F_2$ denote the free group generated by $a$ and $b$.
	If $|p_i|,|q_i|>1$ for each $i$, then
	the words $\prod_i a^{p_i} b^{q_i}, 
	\prod_i a^{p_i} (a^b)^{q_i}\in F_2$ are polygonal~\cite{Kim:2009p3867}.

(3)
	The word $w= \prod_i a^{p_i} (a^b)^{q_i}$ is called a 
	\textit{positive height--1 word} of $F_2$ if $p_i,q_i>0$.
	Put $p=\sum_i p_i, q=\sum_i q_i, p'=\sum_{p_i=1} 1$ 
	and 
	$q'=\sum_{q_i=1} 1$.
	If $pp'\le q^2$ and $qq'\le p^2$, then $w$ is polygonal~\cite{Kim:2009p3867}.
	With a suitable notion of probability on $F_2$,
	this implies a positive height--1 word is ``almost surely'' polygonal.
\end{exmp}
	
Let $p:(X',x')\rightarrow (X,x)$ be a finite covering map for some based spaces $(X',x')$ and $(X,x)$.
For a based loop $\gamma\co (S^1,v)\rightarrow (X,x)$,
let $q\co (S^1,v')\rightarrow (S^1,v)$ be the smallest covering such that
$\gamma \circ q$ lifts to a based loop $\tilde{\gamma}\co:(S^1, v')\rightarrow (X',x')$ 
as shown in the commutative diagram below.
Following~\cite{Wise:2000p790},
we say that $\tilde{\gamma}$ is the \textit{elevation  of $\gamma$ at $x'$  with respect to $p$}.
\[
\xymatrix{
 & (S^1,v') \ar[rr]^{\tilde{\gamma}}\ar[d]^q && (X',x')\ar[d]^p\\
 & (S^1,v) \ar[rr]^{{\gamma}} && (X,x)\\
}
\]

Let $U=\{w_1,\ldots,w_r\}\subseteq F$ be an independent set of root-free, cyclically reduced words,
and $\gamma_i$ be the (based) loop in $\cay(F)/F$ reading $w_i$.
Take two copies $\Gamma_1,\Gamma_2$ of $\cay(F)/F$
and glue each boundary component of a cylinder $C_i\approx S^1\times[-1,1]$ along each copy of $\gamma_i$ in $\Gamma_1$ and in $\Gamma_2$, for $i=1,2,\ldots,r$.
We let $X(U)$
denote the
$2$--dimensional CW-complex thus obtained.
Define $D(U)=\pi_1(X(U))$.
Take $F^{(1)},F^{(2)}\cong F$, and let $w_i^{(j)}$ denote the image of $w_i$ in $F^{(j)}$ for $j=1,2$.
Note that 
\[D(U) \cong \langle F^{(1)},F^{(2)},t_2,t_3,\ldots,t_r \cobar w_1^{(1)}=w_1^{(2)},
(w_i^{(1)})^{t_i}=(w_i^{(2)})\mbox{ for }i=2,\ldots,r\rangle.\]
For a finite-index subgroup $F'\le F$, 
%$\cay(F)/F'$ is the coset Cayley graph of $F/F'$;  note that 
$\cay(F)/F'$ is the finite covering space of $\cay(F)/F$ corresponding to $F'$.
For each choice of the basepoint $\tilde{x}$ of $\cay(F)/F'$, 
there is an elevation of $\gamma_i$ at  $\tilde{x}$ with respect to the covering $\cay(F)/F'\rightarrow\cay(F)/F$.
We identify two elevations $\tilde{\gamma},\tilde{\gamma}'$ 
(at two distinct basepoints $\tilde{x},\tilde{x}'\in\cay(F)/F'$)
of the based loop $\gamma_i\co S^1\rightarrow\cay(F)/F$ 
if $\tilde{\gamma}$ and $\tilde{\gamma}'$ are the same 
as loops without basepoints (\textit{i.e.}, when the basepoints are forgotten); this means, the lifting of some power of $\gamma_i$ at $\tilde{x}$ to $\cay(F)/F'$ terminates at $\tilde{x}'$. See~\cite[Lemma 2.7]{Wilton:2008p7013} and~\cite[Lemma 2.4]{Kim:2009p3867} for algebraic description of this identification.
We let $\{ \tilde{\gamma}_1,\tilde{\gamma}_2,\ldots,\tilde{\gamma}_s\}$ denote the set of all the elevations of $\gamma_1,\gamma_2,\ldots,\gamma_r$ at the vertices of $\cay(F)/F'$, 
after this identification.
Take two copies $\Gamma_1',\Gamma_2'$ of $\cay(F)/F'$ and glue each boundary component of a cylinder $C_j'$ along each copy of $\tilde{\gamma}_j$ in $\Gamma_1'$ and in $\Gamma_2'$, for $j=1,2,\ldots,s$; in this way, one obtains a finite cover $Y(U,F')$ of $X(U)$.
The image of each cylinder in $X(U)$ or in $Y(U,F')$ will still be called a \textit{cylinder}.
Lemma~\ref{lem:cylinder} is a special case of \cite[Lemma 2.2]{Kim:2009p3867}. We omit the proof here.

\begin{lem}[\cite{Kim:2009p3867}]\label{lem:cylinder}
Suppose $U\subseteq F$ is an  independent set of root-free, cyclically reduced  words and $[F:F']<\infty$. Let $S$ be a closed connected surface homeomorphically embedded in
$Y(U,F')$. Then $S$ is the union of some cylinders, $\chi(S)\le0$ and $\pi_1(S)$ embeds into $D(U)$.\qed
\end{lem}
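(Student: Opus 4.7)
The plan is to tackle the three conclusions in order, using the cellular structure of $Y := Y(U,F')$ in which each cylinder $C_j' \cong S^1 \times [-1,1]$ is subdivided into $|\tilde\gamma_j|$ squares along the combinatorics of the loop $\tilde\gamma_j$. First I would show $S$ is a union of cylinders. For each cylinder $C_j'$, its interior is a connected $2$-manifold (an open annulus), and because $S$ is an embedded $2$-manifold of the same dimension in $Y$, the intersection $S \cap \mathrm{int}(C_j')$ is open in $\mathrm{int}(C_j')$ by invariance of domain and closed because $S$ is compact in the Hausdorff space $Y$; by connectedness it is either empty or the entire interior, and taking closures in the latter case yields $C_j' \subseteq S$. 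A short link analysis rules out stray parts of $S$ in the $1$-skeleton: at an interior point $x$ of an edge $e \subset \Gamma_i'$, the link of $Y$ at $x$ consists of the two directions along $e$ together with one arc joining them for each passage of some $\tilde\gamma_j$ through $x$, so requiring $S$'s link to be a topological circle forces $S$ to contain exactly two such arcs, hence two full cylinders adjacent to $x$; a similar analysis at vertices handles the $0$-skeleton.

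Once $S$ is known to be the union of cylinders $C_{j_1}', \ldots, C_{j_k}'$, it inherits a graph-of-spaces decomposition with vertex spaces $V_i := S \cap \Gamma_i'$ and one circular edge space per cylinder in $S$. Additivity of Euler characteristic together with $\chi(S^1)=0$ yields
\[
\chi(S) = \chi(V_1) + \chi(V_2).
\]
Each $V_i$ is a subgraph of $\Gamma_i'$ equal to the union of the loops $\tilde\gamma_{j_\ell}$ whose cylinders lie in $S$, so every edge of $V_i$ lies on a cycle; hence every connected component of $V_i$ contains a cycle and has non-positive Euler characteristic. Therefore $\chi(S) \le 0$.

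Finally, for $\pi_1$-injectivity I would observe that the graph-of-spaces decomposition of $S$ just constructed is a sub-graph-of-spaces of the analogous decomposition of $Y$: the vertex inclusions $V_i \hookrightarrow \Gamma_i'$ are inclusions of subgraphs and hence $\pi_1$-injective, while the edge inclusions are identities on $S^1$. The standard normal-form / Bass--Serre result for a morphism of graphs of groups that is injective on all vertex and edge groups then yields $\pi_1(S) \hookrightarrow \pi_1(Y)$. Since $Y \to X(U)$ is a finite covering, $\pi_1(Y)$ sits inside $\pi_1(X(U)) = D(U)$ as a finite-index subgroup, completing the chain $\pi_1(S) \hookrightarrow D(U)$. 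The main obstacle I anticipate is the first step: the connectedness of cylinder interiors handles ``partial cylinder'' worries, but the link analysis at the non-manifold points of $Y$ must be done carefully to exclude stray $1$-dimensional pieces of $S$.
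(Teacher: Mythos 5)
The paper does not actually prove this lemma---it is stated as a special case of \cite[Lemma 2.2]{Kim:2009p3867} with the proof explicitly omitted---so there is no in-text argument to compare yours against; I can only assess your reconstruction on its own terms. Your overall architecture is sound and is the standard one for such statements: the open-and-closed argument in each cylinder interior (via invariance of domain and compactness of $S$) correctly shows each $\mathrm{int}(C_j')$ is entirely inside or outside $S$, the link analysis (or, more quickly, the observation that $S$ minus the cylinders it contains would be a nonempty open subset of a surface sitting inside a $1$--complex) rules out stray pieces in $\Gamma_1'\cup\Gamma_2'$, and the Euler characteristic computation $\chi(S)=\chi(V_1)+\chi(V_2)$ with each component of $V_i$ non-simply-connected gives $\chi(S)\le 0$. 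One small slip there: it is not true that every edge of $V_i$ lies on a cycle (an immersed loop such as one reading $abcb^{-1}$ in a ``dumbbell'' graph traverses a bridge), but the conclusion you need---that each component of $V_i$ has $\chi\le 0$---follows directly because each component contains the image of some essential immersed loop $\tilde\gamma_j$ and hence cannot be a tree.

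The one step that needs genuine repair is the $\pi_1$--injectivity. The ``standard result'' you invoke---that a morphism of graphs of groups which is injective on all vertex and edge groups induces an injection on fundamental groups---is false as stated: map a wedge of two circles (two edges, trivial vertex and edge groups) onto a single circle by sending both edges to the same edge, and all local groups inject while $\pi_1$ does not. What saves you here is an extra feature of your situation that you should make explicit: because $S$ is \emph{embedded}, each cylinder of $Y(U,F')$ contributes at most one edge space to the decomposition of $S$, so the induced morphism is injective on the edge sets of the underlying graphs. Then a word in $\pi_1(S)$ that is reduced for the decomposition of $S$ maps to a reduced word for the decomposition of $Y(U,F')$: a pinch $t_{\bar e}^{-1} h\, t_{\bar e}$ in the image forces the two edge letters to come from the same cylinder of $S$, and since that cylinder's attaching circle $\tilde\gamma_j$ lies in the relevant component $W$ of $V_i$ and $\pi_1(W)\to\pi_1(\Gamma_i')$ is injective, the element $h\in\langle\tilde\gamma_j\rangle$ computed in $\pi_1(\Gamma_i')$ already lies in $\langle\tilde\gamma_j\rangle\le\pi_1(W)$, so the word was unreduced in $S$. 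With Britton's lemma this closes the gap; without some such argument the appeal to Bass--Serre theory is not justified.
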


The following generalizes \cite[Corollary 2.11]{Kim:2009p3867}.

\begin{theorem}\label{thm:polygonal}
Let $U\subseteq F$ be an independent set of root-free, cyclically reduced  words.
If $U$ is polygonal, then $D(U)$ contains a hyperbolic surface group.
\end{theorem}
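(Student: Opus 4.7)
The plan is to manufacture, from a $U$--polygonal surface $S$ witnessing polygonality of $U$, a closed embedded surface $T\subset Y(U,F')$ in some finite cover of $X(U)$ with $\chi(T)<0$; Lemma~\ref{lem:cylinder} applied to a connected component $T_0$ of $T$ with negative Euler characteristic will then embed the hyperbolic surface group $\pi_1(T_0)$ into $D(U)$.

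Because $U$ is root-free, the proper-power clause of Definition~\ref{defn:polygonal}(2) is vacuous, so polygonality supplies a $U$--polygonal surface $S=\coprod_{i=1}^m P_i/\!\!\sim$ with $\chi(S)<m(S)$, together with an immersion $\phi\co S^{(1)}\to\cay(F)/F$ under which each $\partial P_i$ reads a power $w_{j(i)}^{k_i}$ of some $w_{j(i)}\in U$. If two polygons $P_i,P_{i'}$ happen to have identical boundary loops in $S^{(1)}$, the side-pairing $\sim$ forbids any other polygon from sharing those edges, so $P_i\cup P_{i'}$ together with this shared boundary is an $S^2$--component of $S$ with $\chi=m=2$; discarding every such component preserves the strict inequality $\chi(S)<m(S)$, so I may assume distinct polygons produce distinct loops in $S^{(1)}$. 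Marshall Hall's theorem applied to the finite graph $S^{(1)}$ immersed in $\cay(F)/F$ yields a finite-index subgroup $F'\le F$ and an embedding $\psi\co S^{(1)}\hookrightarrow \cay(F)/F'$ lifting $\phi$; using residual finiteness of $F$, I descend $F'$ further so that for each $i$ the loop $\psi(\partial P_i)$ coincides with an elevation $\tilde\gamma_{l(i)}$ of $\gamma_{j(i)}$ rather than with a proper multiple of a shorter elevation.

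Inside $Y(U,F')$, let $\Sigma_k\subset \Gamma_k'$ denote the embedded copy of $\psi(S^{(1)})$ in the $k$-th copy of $\cay(F)/F'$, for $k=1,2$, and let $A_i:=C_{l(i)}'$ be the cylinder whose two boundary circles are the two copies of $\psi(\partial P_i)$ in $\Sigma_1$ and $\Sigma_2$. Set $T:=\Sigma_1\cup \Sigma_2\cup \bigcup_i A_i$. By the separation conditions on $F'$ the $A_i$ are pairwise distinct cylinders, so $T$ is embedded in $Y(U,F')$. I claim $T$ is a closed $2$--manifold: at an interior point of an edge of $\Sigma_1$ the side-pairing of $S$ forces exactly two $2$--cells of $T$ to meet, giving a circle link; at a vertex $v\in \Sigma_1$ of valence $d_v$ in $S^{(1)}$ the link of $v$ in $T$ is a bipartite graph on $d_v+d_v$ vertices (the $d_v$ horizontal edges of $\Sigma_1$ at $v$ and the $d_v$ vertical edges of the $A_i$'s at $v$), whose $2d_v$ edges come from the corners of $S$ at $v$ and assemble into a single cycle of length $2d_v$ (morally the barycentric subdivision of the circle link of $v$ in $S$); and interiors of cylinders are automatically manifold. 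A direct cell count, using $\sum_i |\partial P_i|=2|E(S^{(1)})|$, produces $\chi(T)=2\chi(S^{(1)})=2(\chi(S)-m(S))<0$. Some component $T_0$ of $T$ therefore satisfies $\chi(T_0)<0$, and Lemma~\ref{lem:cylinder} embeds the hyperbolic surface group $\pi_1(T_0)$ into $D(U)$.

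The main obstacle I expect is the control over the single finite-index subgroup $F'$: arranging simultaneously that $S^{(1)}$ embeds in $\cay(F)/F'$, that no $\psi(\partial P_i)$ multiply covers its elevation, and that distinct polygons yield distinct elevations. The first is Marshall Hall's theorem; the second and third amount to finitely many separability conditions which hold by residual finiteness of $F$; and the $S^2$--reduction at the start disposes of the one genuinely pathological configuration in which no finite cover could distinguish two polygons. Once $F'$ is in hand, the link and cell-count verifications are mechanical.
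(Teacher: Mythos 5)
Your overall architecture is exactly the paper's: use subgroup separability (Marshall Hall) to embed $S^{(1)}$ into $\cay(F)/F'$, realize the double of $S\setminus\cup_i B_i$ as a union of cylinders in $Y(U,F')$, compute $\chi=2(\chi(S)-m(S))<0$, and invoke Lemma~\ref{lem:cylinder}. The $S^2$--discarding observation, the link verification, and the Euler characteristic count are all fine.

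The step that fails is the appeal to residual finiteness for your second and third ``separability conditions.'' Whether $\psi(\partial P_i)$ is an elevation or a proper multiple of one is \emph{not} something you can arrange by descending $F'$ further: the elevation of $\gamma_{j(i)}$ at the basepoint $x_i$ is the unique lift of $\gamma_{j(i)}$ starting at $x_i$, and since $S^{(1)}$ is embedded and path lifting in a cover is unique, that lift simply retraces the edge-path $\partial P_i\to S^{(1)}$. Hence the elevation degree at $x_i$ equals the first return time of that path to $x_i$ along powers of $w_{j(i)}$ --- a quantity intrinsic to $S$, unchanged in any deeper cover. Likewise, no finite cover can separate two polygons whose boundaries trace literally the same edge-loop of $S^{(1)}$. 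The correct fix is combinatorial and is the same trick you already use for spheres. Since the side-pairing makes each edge of $S^{(1)}$ the image of exactly two polygon edges, the only ways $\psi(\partial P_i)$ can fail to be an elevation traversed once, or can coincide with some $\psi(\partial P_{i'})$, are: (a) $\partial P_i$ wraps exactly twice around a shorter loop $\delta$, in which case no other polygon meets $\delta$ and the component containing $P_i$ is $P_i$ modulo the $\pi$--rotation of its boundary, with $\chi=1=m$ (cf.\ Remark~\ref{rem:defn:polygonal}); or (b) $\partial P_i$ and $\partial P_{i'}$ each traverse the same loop once, which is your $S^2$ case with $\chi=2=m$. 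Discarding \emph{all} components with $\chi=m$, not just the spheres, preserves the strict inequality $\chi(S)<m(S)$ and removes every obstruction; after that, your construction of $T$ and the application of Lemma~\ref{lem:cylinder} go through as written.
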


\begin{proof}
	Write $U = \{w_1,\ldots,w_r\}$, and let $\gamma_j\subseteq \cay(F)/F$ realize $w_j$.
	Let $\sim$ be a side-pairing on polygonal disks $P_1,\ldots,P_m$
	such that
	 $S=\coprod_i P_i/\!\!\sim$ is a closed $U$--polygonal surface satisfying	$\chi(S)<m$.
	This implies that for some immersion $\phi\co S^{(1)}\rightarrow\cay(F)/F$,
 	each composition $\partial P_i\rightarrow S^{(1)}\rightarrow\cay(F)/F$
	reads a non-trivial power of an element in $U$.
	Since $F$ is subgroup separable, 
	one can lift the immersion $\phi$ to an embedding
	$\phi'\co S^{(1)}\rightarrow \cay(F)/F'$
	for some $[F:F']<\infty$~\cite{Scott:1978p142}.
	Choose an open disk $B_i$ in the interior of each $P_i$ and 
	let $S'$ be the double of $S\setminus\cup_i B_i$.
	Since $\phi'$ maps each $\partial P_i$ to an elevation of some $\gamma_j$,
	the definition of $Y(U,F')$ implies that
	$S'$ is homeomorphic to the union $S''$ of some cylinders in $Y(U,F')$.
	By Lemma~\ref{lem:cylinder}, $\pi_1(S')$ embeds into $D(U)$.
	Note that $\chi(S')=2 (\chi(S)-m) <0$.
\end{proof}

\begin{rem}\label{rem:polygonal}
Let $U \subseteq F$ be as in the hypothesis of Theorem~\ref{thm:polygonal}.
By an elementary argument on graphs of spaces,
a finite cover of $X(U)$ contains a homeomorphically embedded closed surface if and only if
so does $Y(U,F')$ for some $[F:F']<\infty$.
Then Theorem~\ref{thm:polygonal} can actually be strengthened as follows:
$U$ is polygonal if and only if a finite cover of $X(U)$ contains a  homeomorphically embedded closed hyperbolic surface;
see~\cite{Kim:2009p3867}.
\end{rem}

\subsection{Whitehead graph}

A \textit{graph} means a $1$--dimensional CW-complex. 
For a graph $\Gamma$, let $V(\Gamma)$ and $E(\Gamma)$ denote the vertex set and the edge set, respectively.
For a cyclically reduced word $w = v_1 v_2\ldots v_l\in F$ where $v_i\in \SS\cup\SS^{-1}$,
we let $W(w)$ denote the \textit{Whitehead graph} of $w$. This means,
$W(w)$ is a graph with the vertex set $ \SS\cup\SS^{-1}$
and the edge set $\{e_1,e_2,\ldots,e_l\}$
such that $e_i$ joins $v_i$ and $v_{i+1}^{-1}$, where the indices are taken modulo $l$.
In the special case when $w=a_i$ or $w=a_i^{-1}$,
 $W(w)$ consists of a single edge joining $a_i$ and $a_i^{-1}$.
We define the \textit{connecting map $\sigma_w$ associated with $W(w)$} as the map
$\sigma_w\co\{(e,v)\cobar e\in E(W(w)), v\in \partial e\}\rightarrow E(W(w))$
such that
$\sigma_w(e_i,v_i)=e_{i-1}$ and $\sigma_w(e_i,v_{i+1}^{-1})=e_{i+1}$.
In particular, if $\sigma(e,v)=e'$ then $v\in\partial e$, $v^{-1}\in\partial e'$ and moreover,
the length--$2$ subwords of $w$ corresponding to $e$ and $e'$ are consecutive and share the letter $v$ or $v^{-1}$;
see Example~\ref{exmp:cnt}.

Consider a non-zero integral polynomial $f(x_1,\ldots,x_r)=\sum_{1\le i\le r,1\le j\le s} c_{ij} x_i^j$
where $s>0$ and $c_{ij}\ge0$.
For $U=\{w_1,\ldots,w_r\}\subseteq F$ consisting of cyclically reduced words,
we define \[W(f(U))=W(f(w_1,\ldots,w_r))=\cup_{i=1}^r \cup_{j=1}^s \cup_{k=1}^{c_{ij}} W(w_i^j)\]
where the union is taken so that each term $W(w_i^j)$ has the common vertex set $\SS\cup \SS^{-1}$,
and any two terms do not have a common edge.
For instance,
$W(w+w)$ is obtained by doubling all the edges of $W(w)$, so that
$V(W(w+w))=\SS\cup\SS^{-1}$ and $|E(W(w+w))|=2|E(W(w))| = 2|w|$.
Note that $W(f(U))$ is an abusive notation in that $U$ is considered as an \textit{ordered tuple}
rather than just a set.
We have 
$|E(W(f(U)))| = \sum_{i,j}c_{ij} |E(W(w_i^j))| = \sum_{i,j}c_{ij} j|w_i|$.
The unique extension on $W(f(U))$ of
the connecting map on each $W(w_i^j)$ is called
the \textit{connecting map associated with $W(f(U))$}.

\begin{exmp}\label{exmp:cnt}.
Consider the word $w=ab^{-2}\in F_2=\langle a,b\rangle$. Write $w=v_1v_2v_3$ where $v_1=a$ and $v_2=v_3=b^{-1}$.
$W(w)$ is shown in Fig.~\ref{fig:connecting map} (a);
here, the edges $e_1,e_2$ and $e_3$ correspond to the (cyclic) subwords $v_1v_2=ab^{-1}, v_2v_3=b^{-2}$ and $v_3v_1=b^{-1}a$, respectively. Some of the values of the associated connecting map are given as $\sigma_w(e_1,a)=\sigma_w(e_1,v_1)=e_3$, $\sigma_w(e_3,a^{-1})=\sigma_w(e_3,v_1^{-1})=e_1$ and $\sigma_w(e_3,b^{-1})=\sigma_w(e_3,v_3)=e_2$.

Figure~\ref{fig:connecting map} (b) shows $W(w+w)=W(2w)$. $e_i$ and $e_i'$ denote the edges corresponding to the length--$2$ cyclic subword $v_iv_{i+1}$ of $w$. The connecting map is given as $\sigma_{2w}(e_1,a)=e_3$, $\sigma_{2w}(e_3,a^{-1})=e_1$,$\sigma_{2w}(e_3,b^{-1})=e_2$, $\sigma_{2w}(e_1',a)=e_3'$ and so forth.

$W(w^2)$ is drawn in Fig.~\ref{fig:connecting map} (c). $f_i$ denotes the edge corresponding to the $i$--th length--$2$ cyclic subword of $w^2=ab^{-2}ab^{-2}$. The connecting map can be computed as $\sigma_{w^2}(f_1,a)=f_6$, $\sigma_{w^2}(f_3,a^{-1})=f_4$, $\sigma_{w^2}(f_3,b^{-1})=f_2$, $\sigma_{w^2}(f_4,a)=f_3$ and so forth. Note that $W(2w)$ and $W(w^2)$ are the same as graphs, while $\sigma_{2w}$ and $\sigma_{w^2}$ are distinct.

\end{exmp}

\begin{figure}[htb!]
	\subfigure[$W(w)$.]{
	\includegraphics[]{fig/figcnt.1}
	}\hfill
	\subfigure[$W(w+w)=W(2w)$.]{
	\includegraphics[]{fig/figcnt.2}
	}\hfill
	\subfigure[$W(w^2)$.]{
	\includegraphics[]{fig/figcnt.3} 
	}
	\caption{$w=ab^{-2}$.
	\label{fig:connecting map}}
\end{figure}

\begin{rem}\label{rem:connecting map}
(1)
Let $\sigma$ be the connecting map associated with a Whitehead graph $\Gamma$.
For $e\in E(\Gamma)$ and
$v\in\partial e$, $v^{-1}$ is an endpoint of $\sigma(e,v)$ and
$\sigma(\sigma(e,v),v^{-1})=e$.

(2)
Let $U=\{w_1,\ldots,w_r\}$ and $f(x_1,\ldots,x_r)=\sum_{i,j\ge1}c_{ij} x_i^j$.
For each $i$ and $j$, 
glue $c_{ij}$ copies of a disk to $\cay(F)/F$ along the loop reading $w_{i}^j$.
Let $Z(f(U))$ be the $2$--dimensional CW-complex thus obtained.
Then $W(f(U))$ is precisely the link of the (unique) vertex $v$ in $Z(f(U))$.
The vertex $a_i$ ($a_i^{-1}$, respectively) of $W(f(U))$ corresponds to the 
incoming (outgoing, respectively) portion of the $a_i$-edge in $Z(f(U))^{(1)}$ at $v$.
\end{rem}

A collection $\DD=\{D_1,\ldots,D_n\}$ of disjoint, properly embedded disks in $H$ is called
a \textit{disk structure on $H$} if $H\setminus\cup_i D_i$ is a $3$--cell~\cite{Stallings:1999p3173}.
We will alway equip each disk $D_i$ with a transverse orientation.
Let $A$ be an embedded $1$--submanifold of $H$. 
Assume that $A$ intersects $\DD$ \textit{transversely and minimally}.
This means, 
$A\cap(\cup_i D_i)$ consists of finitely many points
and $|A\cap (\cup_i D_i)|\le |A'\cap (\cup_i D_i)|$ 
for any $1$--submanifold $A'$ freely homotopic to $A$.
Given a disk structure
$\DD$ on $H$, we denote by $\DD\times[-1,1]=\cup_i (D_i\times[-1,1])$ 
a closed regular neighborhood of $\DD$ in $H$ so that $D_i$ is identified with $D_i\times0$
and the transverse orientation of $D_i$ is from $D_i\times{-1}$ to $D_i\times1$.
By transversality, we may assume that $A\cap (D_i\times[-1,1])$ has the product structure $(A\cap D_i)\times[-1,1]$ for each $i$.
We define the \textit{Whitehead graph $W(A,\DD)$ of $A$ with respect to $\DD$}
as the graph such that the vertex set is $\SS\cup\SS^{-1}$
and the edge set consists of arcs in $A\setminus (\DD\times(-1,1))$,
where  $A\cap (D_i\times\pm1)$ are identified with the vertices $a_i^{\pm1}$ in $W(A,\DD)$.
In other words, $W(A,\DD)$ is the graph obtained from
$(A\cup(\DD\times[-1,1]))\setminus\DD\times(-1,1)$
by collapsing each $D_i\times\{\pm1\}$ onto the vertices $a_i^{\pm1}$.
Suppose each loop $\gamma_j$ in $A=\gamma_1\cup\ldots\cup\gamma_r$
is equipped with an orientation.
Follow each loop $\gamma_j$, and whenever $\gamma_j$ intersects $D_i$, record 
$a_i$ or $a_i^{-1}$ according to whether the orientation of $\gamma_j$ coincides with the transverse
orientation of $D_i$ or not. Let $w_j$ be the 
word thus obtained. 
In this case, 
we say that $U=\{w_1,\ldots,w_r\}$ is \textit{realized by $A$} with respect to $\DD$.
As we are assuming that $A$ intersects $\DD$ minimally,
$U$ consists of cyclically reduced words.
Depending on the choice of the basepoint of $\gamma_j$, 
$w_j$ is determined up to cyclic conjugation.
Observe that $W(A,\DD)=W(U)$, 
and 
the notion of the connecting map is also defined for $W(A,\DD)$ by that of $W(U)$.
Moreover, for any other choice of disk structure $\DD'$ on $H$ which $A$ intersects transversely and minimally,
$W(A,\DD')=W(\phi(U))$ for some $\phi\in\aut(F)$.

For a simple loop $\gamma$ embedded in $H$ and $c,j>0$, we let $c\gamma^j$ denote the $1$--submanifold consisting of $c$ components, each of which is freely homotopic to $\gamma^j$. Consider a non-zero integral polynomial $f(x_1,\ldots,x_r)=\sum_{i,j\ge1} c_{ij} x_i^j$ where $c_{ij}\ge0$. Let $A$ be a $1$--submanifold of $H$ written as the union of disjoint loops $A=\gamma_1\cup\cdots\cup\gamma_r$. Then $f(A)=f(\gamma_1,\ldots,\gamma_r)$ denotes a $1$--submanifold in $H$ freely homotopic to $\coprod_{i,j\ge1} c_{ij}\gamma_i^j$. $f(A)$ is again an abusive notation in that $A$ is considered as an ordered tuple of loops, rather than just a set of loops. If a $1$--submanifold $A\subseteq H$ realizes $U\subseteq F$ with respect to a disk structure $\DD$, then $W(f(U)) = W(f(A),\DD)$.

\begin{rem}\label{rem:minimal}
When considering the Whitehead graph of $U\subseteq F$ or a $1$--submanifold $A\subseteq H$, we always require that $U$ consists of cyclically reduced words, and $A$ intersects $\DD$ transversely and minimally. If necessary, we achieve this by replacing words in $U$ by some cyclic conjugations or by freely homotoping loops in $A$. Note that certain additional conditions on $A$, such as $A\subseteq\partial H$, can possibly be lost by freely homotoping $A$.
\end{rem}

A properly embedded disk $D$ in $H$ is \textit{essential} if either $H\setminus D$ is connected or neither of the components of $H\setminus D$ is a $3$--cell. Let $A$ be a $1$--submanifold of $H$.  Suppose that for any $1$--submanifold $A'$ freely homotopic to $A$, $A'$ intersects any essential disk $D\subseteq H$. Then
 $A$ is said to be \textit{diskbusting}. Note that a set $U$ of cyclically reduced words in $F$ is diskbusting if and only if a $1$--submanifold $A\subseteq H$ realizing $U$ is diskbusting.

A vertex $v$ in a graph $\Gamma$ is a \textit{cut vertex} if $\Gamma\setminus\{v\}$ is not connected.
We say that a Whitehead graph $W(A,\DD)$ is \textit{minimal}
%\symbolfootnote[8]{Minimality of $W(A,\DD)$ should not be confused with the hypothesis that $A$ intersects $\DD$ transversely and minimally.}
\footnote{Minimality of $W(A,\DD)$ should not be confused with the hypothesis that $A$ intersects $\DD$ transversely and minimally.}
if 
$|E(W(A,\DD))|\le |E(W(A',\DD'))|$
for any $1$--submanifold $A'$ freely homotopic to $A$ and 
any disk structure $\DD'$ which $A'$ intersects transversely and minimally.
$U\subseteq F$ is \textit{minimal} if $U$ is not equivalent to any $U'\subseteq F$ where the sum of the lengths of the words in $U'$ is smaller than that of $U$.
Using a well-known finite-time algorithm to determine whether $U\subseteq F$ is diskbusting or
not, one gets the following:

\begin{theorem}[\cite{Whitehead:1936p4475,Stong:1997p4326,Stallings:1999p3173}]
	\begin{enumerate}
	\item
If $U\subseteq F$ is minimal, independent and diskbusting,
then $W(U)$ is connected and does not have a cut vertex.
	\item
	Suppose $A\subseteq H$ is a diskbusting $1$--submanifold
	which intersects a disk structure $\DD$ on $H$ transversely and minimally,
	such that $W(A,\DD)$ is minimal.
	Then $W(A,\DD)$ is connected and does not have a cut vertex.\qed
	\end{enumerate}
	\label{thm:stallings}
\end{theorem}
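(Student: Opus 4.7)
My plan is to prove both parts via Whitehead's peak-reduction argument: the algebraic form for (1), Stallings' topological translation for (2). The common strategy establishes a dichotomy for any disconnection or cut vertex of the Whitehead graph: it produces either a move that strictly reduces complexity (contradicting minimality) or a certificate---a proper free factor containing $U$, respectively an essential disk of $H$ disjoint from $A$---that violates diskbustingness.

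For (1), I first introduce for each admissible pair $(Z,a)$ with $a\in Z\subseteq \SS\cup\SS^{-1}$ and $a^{-1}\notin Z$ the Whitehead automorphism $\phi_{Z,a}$, which sends each free generator $b$ to $a^{-\epsilon_1} b\, a^{\epsilon_2}$, where $\epsilon_1=1$ iff $b^{-1}\in Z$, $\epsilon_2=1$ iff $b\in Z$, and $\epsilon_1=\epsilon_2=0$ when $b=a$. A direct edge-count in $W(U)$ yields Whitehead's length-change formula: $|\phi_{Z,a}(U)|-|U|$ equals the number of edges of $W(U)$ with exactly one endpoint in $Z\setminus\{a\}$, minus the valence of $a^{-1}$ in $W(U)$. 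Now suppose $W(U)$ is disconnected, and pick a component $C$. If $C$ is closed under $x\mapsto x^{-1}$, then, since consecutive letters of each $w_i\in U$ are joined by an edge of $W(U)$, every $w_i$ lies entirely in the free factor generated by the positive letters of $C$; this contradicts diskbustingness. Otherwise some pair $\{a,a^{-1}\}$ is split across components, and taking $Z=C$ makes $\phi_{Z,a}$ strictly length-decreasing, contradicting minimality. If instead $W(U)$ is connected with a cut vertex $v$, I run the same argument with $Z=C\cup\{v\}$ for a component $C$ of $W(U)\setminus\{v\}$ and base point $a=v$.

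For (2), I carry the entire argument out geometrically in $H$. Each Whitehead automorphism is realized by a disk slide---surgery on some $D_i\in\DD$ along an embedded arc in $H\setminus(\DD\cup A)$ matching the combinatorics of the pair $(Z,a)$---and this slide changes $|A\cap\DD|$ by exactly the quantity appearing in the length-change formula above. Consequently, a cut vertex or a disconnection of $W(A,\DD)$ produces either a disk structure $\DD'$ with $|A\cap\DD'|<|A\cap\DD|$, contradicting minimality of $W(A,\DD)$, or an essential compressing disk of $H$ disjoint from $A$---extracted from the handle structure dual to the separated component---contradicting diskbustingness of $A$.

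The principal technical hurdle in both halves is establishing the length, respectively intersection, change count correctly, handling carefully how edges and half-edges of $W(U)$ sit with respect to $Z$; in the topological setting, one must also exhibit the essential compressing disk from the combinatorial separation when no reducing move is available. Both steps are routine but delicate, and are worked out in \cite{Whitehead:1936p4475,Stong:1997p4326,Stallings:1999p3173}.
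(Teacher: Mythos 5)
First, a point of comparison: the paper does not prove Theorem~\ref{thm:stallings} at all --- it is quoted from the cited sources with a \textit{qed} immediately following the statement. So there is no in-paper argument to measure yours against; your sketch is the standard peak-reduction/cut-vertex argument of Whitehead, Stong and Stallings, and the overall architecture (a disconnection or cut vertex either feeds a strictly reducing Whitehead move, contradicting minimality, or exhibits a proper free factor, resp.\ an essential disk missed by $A$, contradicting diskbustingness) is exactly the expected route, with part (2) being its handlebody translation via disk swaps.

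There is, however, a concrete error in the one quantitative step you write down, and it breaks your main application. With your convention $b\mapsto a^{-\epsilon_1}ba^{\epsilon_2}$, the correct length change is
\[
|\phi_{Z,a}(U)|-|U| \;=\; \#\{\mbox{edges of } W(U) \mbox{ with exactly one endpoint in } Z\}\;-\;\mathrm{val}(a),
\]
equivalently (since $W(U)$ has no loops) the number of edges joining $Z\setminus\{a\}$ to the complement of $Z$ minus the number of edges joining $Z\setminus\{a\}$ to $a$. It is \emph{not} the number of edges with exactly one endpoint in $Z\setminus\{a\}$ minus $\mathrm{val}(a^{-1})$. (Replacing $\mathrm{val}(a)$ by $\mathrm{val}(a^{-1})$ is harmless, as these coincide for Whitehead graphs of cyclic words; shrinking $Z$ to $Z\setminus\{a\}$ in the edge count is not.) A sanity check: for $w=ab$, $Z=\{a,b\}$, multiplier $a$, the image $aba$ has length $3$, so the change is $+1$; the correct formula gives $2-1=1$, while yours gives $1-1=0$. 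More importantly, in your key application --- $W(U)$ disconnected, $C$ a component with $a\in C$, $a^{-1}\notin C$, $Z=C$ --- your formula evaluates to $\mathrm{val}(a)-\mathrm{val}(a^{-1})=0$, so you obtain no strict decrease and hence no contradiction with minimality; the correct formula gives $-\mathrm{val}(a)<0$. Two smaller repairs are also needed: in the cut-vertex case you must take the component $C$ of $W(U)\setminus\{v\}$ \emph{not} containing $v^{-1}$, since otherwise $(C\cup\{v\},v)$ is not a Whitehead automorphism; and when $\mathrm{val}(a)=0$ the move is trivial, but then the generator is unused and $U$ already lies in a proper free factor. With these corrections your outline is the standard proof of the cited references.
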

    
\section{Simple Surgery}\label{sec:ss}
Zieschang proved a key fact on a minimal Whitehead graph of a geometric $1$--submanifold in $H$.
    
\begin{theorem}[\cite{Zieschang:1965p4314}; see also~\cite{Berge:2009p3422}]\label{thm:geometric}
	Suppose $A$ is a geometric $1$--submanifold of $H$ which intersects a disk structure $\DD$ transversely and minimally. If $W(A,\DD)$ is minimal, then $A$ is freely homotopic to some $1$--submanifold $A'\subseteq\partial H$ such that $A'$ intersects $\DD$ transversely and minimally. In particular, $W(A',\DD)=W(A,\DD)$.
\qed
\end{theorem}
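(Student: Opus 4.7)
The plan is to start with some boundary realization of $A$, which exists by geometricity, reduce its intersection with $\DD$ as far as possible by isotopies supported in $\partial H$, and then exploit the global minimality of $W(A,\DD)$ to conclude that no further reduction is available via homotopies into the interior of $H$. The output will automatically be transverse to $\DD$, minimal in intersection number, and carry the same Whitehead graph as $A$.

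Concretely, since $A$ is geometric, I would first fix a $1$-submanifold $A_0 \subseteq \partial H$ freely homotopic to $A$ in $H$, and after a small ambient isotopy on $\partial H$ I may assume $A_0$ is transverse to every $\partial D_i$. I would then iteratively reduce $|A_0 \cap (\cup_i \partial D_i)|$ by surface isotopies supported in $\partial H$: whenever $A_0$ and some $\partial D_i$ co-bound an embedded bigon on $\partial H$, one pushes $A_0$ across that bigon, strictly decreasing the intersection count. Let $A' \subseteq \partial H$ denote the terminal object of this procedure, so that $A'$ admits no $\partial H$-bigon with any $\partial D_i$.

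The hardest step will be verifying $|A' \cap \DD| = |A \cap \DD|$. The inequality $|A' \cap \DD| \ge |A \cap \DD|$ is automatic from the transverse minimality of $A \cap \DD$. For the reverse, I would suppose for contradiction that some $D_i$ still admits a bigon in $H$ reducing $A' \cap D_i$. Cutting $H$ along $\DD$ yields a $3$-cell $B$ whose boundary sphere decomposes into $2n$ disks $D_1^{\pm},\ldots,D_n^{\pm}$ and a planar region $P \subseteq \partial H$. The cut-open bigon becomes a disk in $B$ whose boundary lies in $(A' \cap B) \cup D_i^{\pm}$, and an innermost-disk argument on the complement of $A'$ in $B$, together with the planarity of $P$, should permit isotoping this disk into $P$, producing a $\partial H$-bigon and contradicting the terminal position of $A'$. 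This is precisely where the global minimality hypothesis on $W(A,\DD)$ enters: without it, the only way to reduce intersections might force a Whitehead automorphism and a replacement of the disk structure by a different one, whereas by hypothesis no such replacement can lower the edge count of the Whitehead graph, pinning the argument to $\DD$ itself.

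Finally, once $|A' \cap \DD| = |A \cap \DD|$ is established, $A'$ meets $\DD$ transversely and minimally, and since $A'$ is freely homotopic to $A$ in $H$, the cyclically reduced words read by corresponding components of $A$ and $A'$ coincide up to cyclic conjugation. Consequently the edge multisets of $W(A',\DD)$ and $W(A,\DD)$ agree on the common vertex set $\SS \cup \SS^{-1}$, giving the desired identity $W(A',\DD)=W(A,\DD)$.
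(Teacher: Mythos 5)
First, note that the paper does not prove this statement at all: it is quoted from Zieschang (see also Berge) and stated with a \qed, so there is no internal argument to compare yours against. Your proposal must therefore stand on its own, and it has a genuine gap at exactly the point you flag as ``the hardest step.''

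The gap is the claim that, once $A'\subseteq\partial H$ admits no bigon with any $\partial D_i$ \emph{on the surface} $\partial H$, an innermost-disk argument in the complementary $3$--cell $B$ lets you convert any remaining reducing disk into a $\partial H$--bigon. This is false in general. If $|A'\cap\DD|$ exceeds the minimum, the word read by some component of $A'$ is not cyclically reduced, which means there is an arc of $A'\cap P$ with both endpoints on the same copy $D_i^{\pm}$ of some disk. If that arc cuts off, together with a subarc of $\partial D_i^{\pm}$, a disk in the planar region $P$ containing no other boundary circles, then an innermost such arc gives a surface bigon and your isotopy argument applies. But the arc may be \emph{essential} in $P$ (a ``wave'' in the Heegaard-splitting sense): the disk it cobounds in $B$ necessarily separates other circles $D_j^{\pm}$, and no isotopy supported in $\partial H$ removes it. The only way to eliminate a wave is to surger the disk system: band $D_i$ along the wave to get two new properly embedded disks, check that replacing $D_i$ by (the correct) one of them yields again a disk structure $\DD'$, and verify that $|A'\cap\DD'|<|A'\cap\DD|$ — which is precisely the geometric incarnation of a Whitehead automorphism shortening the cyclic word. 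You gesture at this in your final sentence of that paragraph, but you never carry it out: you do not exhibit the new disk structure, do not show the edge count of the Whitehead graph strictly drops, and hence do not actually derive a contradiction with the hypothesis that $W(A,\DD)$ is minimal over \emph{all} disk structures. As written, the minimality hypothesis is invoked rhetorically rather than used.

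The remaining pieces of your argument are fine: $|A'\cap\DD|\ge|A\cap\DD|$ is immediate from minimality of $A$, and once equality is established both submanifolds read cyclically reduced, conjugate (hence cyclically equal) words, so $W(A',\DD)=W(A,\DD)$ follows. To repair the proof you would need to supply the wave-surgery step (this is the content of Zieschang's argument and of Whitehead's ``cut vertex/wave'' machinery as in Stallings' and Berge's treatments), or else simply cite the result as the paper does.
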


A \textit{pairing} $\sim$ on a finite set $X$ will mean an equivalence relation on $X$ such that each equivalence class consists of precisely two elements. From now on, we will use the notation $I_1=(0,1]$ and $I_{-1}=[-1,0)$.

\begin{setting}[for Definition~\ref{defn:ss}]\label{setting:ss}
Let $A\subseteq H$ be a $1$--submanifold and $\DD=\{D_1,\ldots,D_n\}$ be a disk structure on $H$ such that $A$ intersects $\DD$ transversely and minimally.
Recall our convention that  $\DD\times [-1,1]=\cup_i (D_i\times[-1,1])$  denotes a closed regular neighborhood of $\DD$ equipped with a product structure so that $D_i$ is identified with $D_i\times 0$. In particular, if $v\in D_i$, then $v\times 1\in D_i\times1\subseteq H$ and  $v\times -1\in D_i\times-1\subseteq H$. We will also assume that $A$ is compatible with the product structure in the sense that $A\cap (D_i\times[-1,1]) = (A\cap D_i)\times[-1,1]$ for each $i$. 
For each $i$, suppose $|A\cap D_i|$ is even and $\sim_i$ is a pairing on $A\cap D_i$.
For each pair $v\sim_i v'$, let $\alpha_{vv'}^{+}\subseteq D_i\times I_1$ be an arc joining $v\times1$ and $v'\times 1$, and $\alpha_{vv'}^-\subseteq D_i\times I_{-1}$ be an arc joining $v\times-1$ and $v'\times-1$;
moreover, we assume that all the arcs in $\{\alpha_{vv'}^\pm\cobar\; v\sim_i v'\mbox{ for some }i\}$ are disjoint (see Fig.~\ref{fig:sseg1} and~\ref{fig:sseg2}).
Let $A'\subseteq H\setminus\DD$ be the $1$--submanifold obtained from $A$ if one replaces $A\cap (D_i\times[-1,1])$ by the arcs $\cup_{v\sim_i v'}\alpha_{vv'}^\pm$ for all $i$. 
\end{setting}

\begin{definition}\label{defn:ss}
In Setting~\ref{setting:ss}, assume further the following:
\enumir
\begin{enumerate}
\item
for each $1\le i\le n$ and $\epsilon=\pm1$, the intersection between each component of $A'$ and $D_i\times I_\epsilon$ is either connected or empty;
\item
for some component $\gamma$ of $A'$, $\gamma\cap(\DD\times[-1,1])$ has more than two components.
\end{enumerate}
Then we say that $(\DD,\{\sim_i\})$ \textit{determines a simple surgery on $A$.}
When $\sim_i$ needs not be explicitly notated, we simply say \textit{$A$ admits a simple surgery (with respect to $\DD$)}.
Also, $A'$ is said to be \textit{obtained by the simple surgery $(\DD,\{\sim_i\})$ on $A$}.
\enumia
\end{definition}

\begin{exmp}\label{exmp:ss1}
Let $A\subseteq H$ be the $1$--submanifold denoted as the bold curve in Fig.~\ref{fig:sseg1} (a). Write $D_1\cap A = \{v,v'\}$ and $D_2\cap A=\{w,w'\}$. Define $v\sim_1 v'$ and $w\sim_2 w'$, and consider $\alpha_{vv'}^\pm$ and $\alpha_{ww'}^\pm$ as in Setting~\ref{setting:ss}.
Put $\DD=\{D_1,D_2\}$.
Figure~\ref{fig:sseg1} (b) shows $A'\subseteq H$ which is obtained by substituting $\alpha_{vv'}^\pm$ and $\alpha_{ww'}^\pm$ for $A\cap (\DD\times[-1,1])$.
Note that  $A'\cap (\DD\times[-1,1])$ has four components $\{\alpha^+_{vv'},\alpha^-_{vv'},\alpha^+_{ww'},\alpha^-_{ww'}\}$.
It follows that $A$ admits the simple surgery $(\DD,\{\sim_1,\sim_2\})$. 
\begin{figure}[htb!]
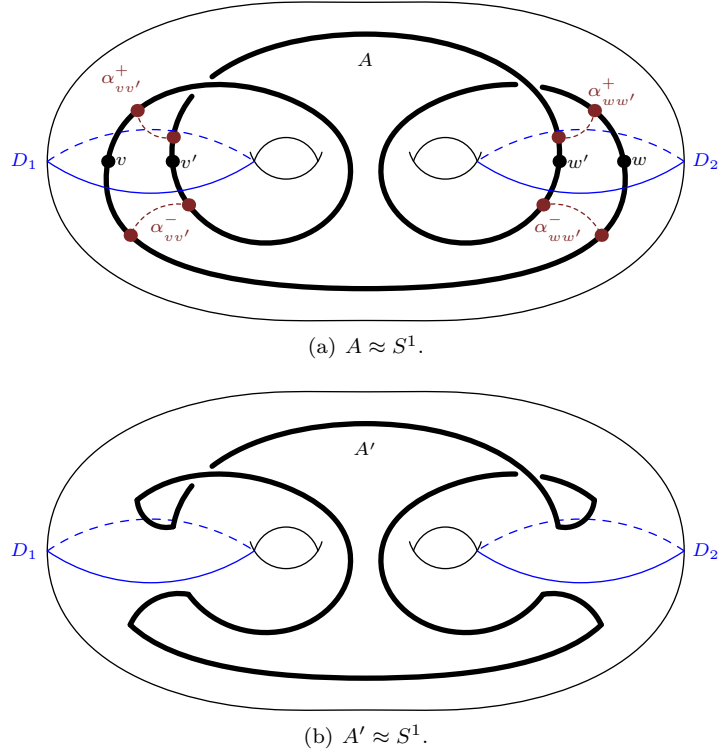

	\subfigure[$A\approx S^1$.]{
	\includegraphics[]{fig/figsseg1.1}
	}
	\subfigure[$A'\approx S^1$.]{
	\includegraphics[]{fig/figsseg1.2}
	}
	\caption{$A'$ is obtained by a simple surgery on $A$.
	\label{fig:sseg1}}
\end{figure}
\end{exmp}

\begin{exmp}\label{exmp:ss2}
The bold curves in Fig.~\ref{fig:sseg2} (a) denotes $A\subseteq H$.
Put $D_1\cap A = \{u,u',v,v'\}$ and $D_2\cap A=\{w,w'\}$ as shown in the figure. 
Set $\DD=\{D_1,D_2\}$.

Define $u\sim_1 u',v\sim_1 v'$ and $w\sim_2 w'$. 
Replace suitable segments on $A$ by $\alpha_{uu'}^\pm,\alpha_{vv'}^\pm$ and $\alpha_{ww'}^\pm$ as in Setting~\ref{setting:ss}, to obtain $A'=\gamma_1\coprod\gamma_2\coprod \gamma_3$; see Fig.~\ref{fig:sseg2} (b).
Note that condition (ii) of Definition~\ref{defn:ss} is violated, since $\gamma_i\cap(\DD\times[-1,1])$ has exactly two components for each $i=1,2,3$.

Consider a different pairing $u\sim_1 v',u'\sim_1 v$ and $w\sim_2 w'$.
Following Setting~\ref{setting:ss}, replace segments in $A\cap (\DD\times[-1,1])$ accordingly to obtain $A''\approx S^1$ described in Fig.~\ref{fig:sseg2} (c).
Condition (i)  of Definition~\ref{defn:ss} then fails, since $A''\cap (D_1\times I_1)$
and $A''\cap (D_1\times I_{-1})$ are both disconnected. In this way, one can see that $A$ does not admit any simple surgery with respect to $\DD$.

\begin{figure}[htb!]
	\subfigure[$A\approx S^1\coprod S^1\coprod S^1\coprod S^1$.]{
	\includegraphics[]{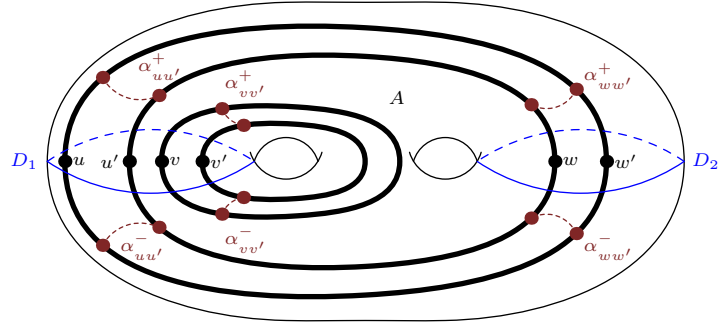}
	}
	\subfigure[$A'\approx S^1\coprod S^1\coprod S^1$.]{
	\includegraphics[]{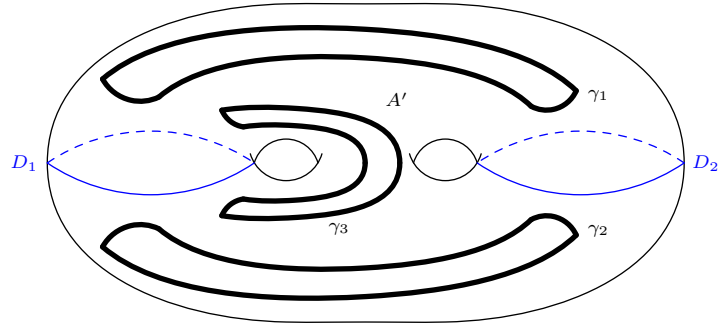}
	}
	\subfigure[$A''\approx S^1$.]{
	\includegraphics[]{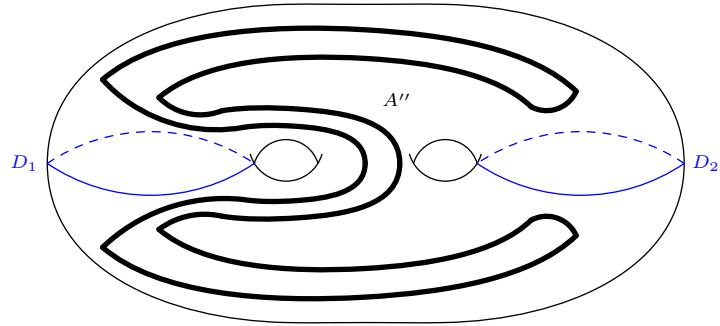}
	}
\caption{$A$ does not admit a simple surgery with respect to $\DD=\{D_1,D_2\}$.
	\label{fig:sseg2}}
\end{figure}
\end{exmp}

Whether $A\subseteq H$ admits a simple surgery or not can be detected by looking at a corresponding Whitehead graph.

\begin{prop}\label{prop:ss}
Let $A$ be a $1$--submanifold of $H$ 
which intersects a disk structure $\DD$ transversely and minimally.
Let $\sigma$ denote the connecting map associated with $\Gamma=W(A,\DD)$.
Then  $A$ admits a simple surgery with respect to $\DD$ if and only if $\Gamma=\cup_h C_h$ for some $C_h$'s satisfying the following:
\enumir
\begin{enumerate}
\item
each $C_h$ is a simple cycle,
\item
$C_h$ and $C_h'$ do not have a common edge whenever $h\ne h'$,
\item
if  $e$ and $e'$ are edges of some $C_h$ intersecting at a vertex $v$,
then $\sigma(e,v)$ and $\sigma(e',v)$ are edges of some $C_{h'}$,
\item
at least one $C_h$ is not a bigon.
\end{enumerate}
\enumia
\end{prop}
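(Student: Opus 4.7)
The plan is to establish a dictionary between simple-surgery data $\{\sim_i\}$ and cycle decompositions of $\Gamma = W(A,\DD)$, and then to translate each defining condition across the correspondence.

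The foundation is the identification, for each vertex $a_i^\epsilon$ (where $\epsilon = \pm 1$), of the halfedges of $\Gamma$ at $a_i^\epsilon$ with the intersection points $A \cap D_i$: every $v \in A \cap D_i$ contributes an arc endpoint $v \times \epsilon$ on $D_i \times \epsilon$, which becomes a halfedge at $a_i^\epsilon$ after the collapse. Under this identification the connecting map $\sigma$ is the pass-through across $D_i$: if an edge $e$ of $\Gamma$ has a halfedge at $a_i^\epsilon$ labelled by $v \in A \cap D_i$, then $\sigma(e, a_i^\epsilon)$ is the unique edge whose halfedge at $a_i^{-\epsilon}$ is also labelled by $v$ (the two local arcs of $A$ on either side of $v$). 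Consequently, a pairing $\sim_i$ on $A \cap D_i$ is the same data as a pair of pairings on halfedges at $a_i$ and at $a_i^{-1}$ that are intertwined by $\sigma$.

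For the forward direction, suppose $(\DD, \{\sim_i\})$ determines a simple surgery producing $A'$. Each component $\gamma$ of $A'$ alternates edges of $\Gamma$ (the arcs of $\gamma \setminus (\DD \times (-1,1))$) with short arcs $\alpha_{vv'}^\epsilon$; recording the edges traversed yields a closed walk $C_\gamma$ in $\Gamma$. Condition (i) of Definition~\ref{defn:ss} says $\gamma$ uses at most one short arc in each $D_i \times I_\epsilon$, which translates to $C_\gamma$ using at most one pairing at each vertex, i.e.\ $C_\gamma$ is a simple cycle. Taking $\{C_h\} = \{C_\gamma\}$, conditions (i) and (ii) of the proposition are immediate (the components of $A'$ partition the edges of $\Gamma$), condition (iv) is exactly condition (ii) of Definition~\ref{defn:ss} (the number of components of $\gamma \cap (\DD \times [-1,1])$ equals $|E(C_\gamma)|$), and condition (iii) follows because $\sigma$ sends the pairing at $a_i^\epsilon$ induced by $\sim_i$ to the pairing at $a_i^{-\epsilon}$ induced by the same $\sim_i$.

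For the reverse direction, start from a decomposition $\Gamma = \cup_h C_h$ satisfying (i)--(iv). At each vertex $a_i^\epsilon$ the two edges of each cycle $C_h$ that meet there are declared paired; translating this pairing of halfedges to intersection points via the dictionary gives a candidate pairing $\sim_i^\epsilon$ on $A \cap D_i$. Condition (iii) is precisely the input that forces $\sim_i^+$ and $\sim_i^-$ to agree, since it forces the $\sigma$-image of any pair of edges meeting in some $C_h$ at $a_i^\epsilon$ to be a pair of edges meeting in some $C_{h'}$ at $a_i^{-\epsilon}$. One thus obtains a single well-defined pairing $\sim_i$ on $A \cap D_i$; performing the surgery $(\DD, \{\sim_i\})$ recovers the $C_h$'s as the components of $A'$, so simplicity of the cycles gives condition (i) of Definition~\ref{defn:ss} and condition (iv) gives condition (ii). The main conceptual obstacle is fixing the dictionary between intersection points and halfedges cleanly and recognising $\sigma$ as the pass-through at a disk; once that is settled, conditions (i), (ii), and (iv) of the proposition are essentially reformulations of (i) and (ii) of Definition~\ref{defn:ss}, while condition (iii) is the precise algebraic ingredient needed to amalgamate the two side pairings of each disk into a single pairing on $A \cap D_i$.
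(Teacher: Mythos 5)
Your proof is correct and follows essentially the same route as the paper's: both set up the dictionary identifying points of $A\cap D_i$ with half-edges of $\Gamma$ at $a_i^{\pm1}$, interpret $\sigma$ as the pass-through across each disk, and then translate conditions (i)--(iv) into conditions (i)--(ii) of Definition~\ref{defn:ss} exactly as you describe (in particular, the paper likewise defines $\sim_i$ from consecutivity in the $C_h$'s on the $+$ side and uses (iii) to see it matches the $-$ side, and uses (ii) to see it is a genuine pairing).
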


In condition (iii), $h$ and $h'$ can possibly be the same.

\begin{proof}
Write $\DD=\{D_1,\ldots,D_n\}$.
We let $\Phi\co (A\cup (\DD\times[-1,1]))\setminus(\DD\times(-1,1))\rightarrow\Gamma$
denote the map collapsing $D_i\times\pm1$ onto the vertices $a_i^{\pm1}$.

($\Rightarrow$) 
Let $A'$ be obtained from $A$ by the simple surgery $(\DD,\{\sim_i\})$.
$\Phi$ maps each loop in $A'$ onto a simple cycle in $\Gamma$,
by the definition of a simple surgery. Hence, we have the condition (i).  
Let $\{C_1,C_2,\ldots, C_t\}$ denote the set of simple cycles in $\Gamma$ thus obtained.
(ii) follows from the fact that $A'$ consists
of disjoint loops. 
For (iii), let $e,e'\in E(C_h)$ intersect with, say, $a_i\in V(\Gamma)$.
Then there exist two arcs $\alpha,\alpha'$ in $A\setminus (\DD\times(-1,1))$
and $v,v'\in A\cap D_i$
such that
$v\sim_i v'$,
$v\times1\in \alpha,v'\times 1\in \alpha'$
and,
$e$ and $e'$ are the images of $\alpha$ and $\alpha'$
by $\Phi$.
Let $f$ and $f'$ be the edges in $\Gamma$
which are the images of the arcs in $A\setminus (\DD\times(-1,1))$
intersecting the disk $D_i\times-1$ at $v\times-1$ and at $v'\times-1$, respectively.
By the definition of $\sigma$, we have $\sigma(e,v)=f$ and $\sigma(e',v)=f'$.
Since $v\sim_i v'$, $f$ and $f'$ belong to some $C_{h'}$, implying (iii).
For (iv), one has only to consider a loop $\gamma$ in $A'$ that intersects more than two of $D_i\times I_{\pm1}$'s.

($\Leftarrow$) 
Choose any $D_i$ and $v,v'\in A\cap D_i$ such that $v\ne v'$.
Let $\gamma$ and $\gamma'$ be (possibly the same) loops in $A$ 
intersecting $D_i$ at $v$ and $v'$, respectively.
Let $e_+,e_-,e'_+$ and $e'_-$ denote the edges of $\Gamma$ corresponding to the components 
of $A\setminus(\DD\times(-1,1))$
containing $v\times1$, $v\times-1$, $v'\times 1$ and $v'\times-1$, respectively.
Define $v\sim_i v'$ if $e_+$ and $e'_+$ happen to be consecutive edges of some $C_h$;
by (iii), this occurs if and only if
$e_-=\sigma(e_+,a_i)$ and $e_-'=\sigma(e'_+,a_i)$ are consecutive in some $C_{h'}$.
By (ii), $\sim_i$ is a pairing on $A\cap D_i$.
Let $A'$ be obtained from $A$ and from the pair $(\DD,\{\sim_i\})$, by the process described in Setting~\ref{setting:ss}.
Whenever $v\sim_i v'$ for some $i$, there is an arc in $A'$ joining $v\times1$ and $v'\times1$, and another arc in $A'$ joining $v\times-1$ and $v'\times-1$.
$\Phi$ sends each loop in $A'$ to some $C_h$ by the definition of $\{\sim_i\}$.
Each loop in $A'$ does not intersect $D_i\times I_\epsilon$
more than once for any $i$ and $\epsilon=\pm1$, by the condition (i).
Condition (iv) implies that at least one loop in $A'$ intersects more than two of $D_i\times I_{\pm1}$'s.
Hence, $(\DD,\{\sim_i\})$ determines a simple surgery on $A$.
\end{proof}

For a planar graph $\Gamma\subseteq S^2$,
a \textit{bigon neighborhood} of $\Gamma$ is $\cup_{e\in E(\Gamma)} N_e\subseteq S^2$,
where 
\enumir
\begin{enumerate}
\item
each $N_e$ is a $2$--cell such that $e$ is properly embedded in $N_e$, and
\item
$N_e\cap N_{e'}=e\cap e'$ for $e\ne e'$.
\end{enumerate}
\enumia
For example, see Fig.~\ref{fig:figbigon}.

\begin{figure}[htb!]
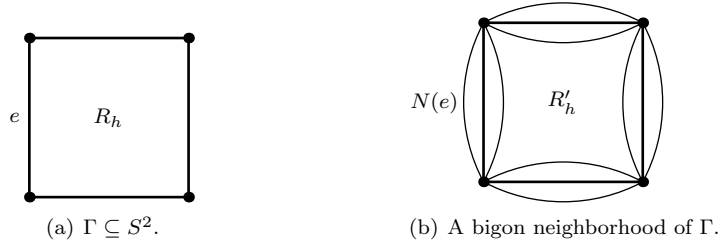

	\subfigure[$\Gamma\subseteq S^2$.]{
	\includegraphics[]{fig/figbigon.0}
	}
	\hspace{.9in}
	\subfigure[A bigon neighborhood of $\Gamma$.]{
	\includegraphics[]{fig/figbigon.1}
	}
	\caption{An example of a bigon neighborhood. Note that $R_h'\subseteq R_h$.
	\label{fig:figbigon}}
\end{figure}

\begin{definition}\label{defn:wss}
	Consider a $1$--submanifold $A=\gamma_1\cup\cdots\cup\gamma_r\subseteq H$ where each $\gamma_i$ is a loop.
If there exists a non-zero integral polynomial of the form $f(x_1,\ldots,x_r) = \sum_{i,j\ge1} c_{ij} x_i^j$ where $c_{ij}\ge0$ such that $f(A)=\coprod_{i,j\ge1}c_{ij}\gamma_i^j$ admits a simple surgery with respect to a disk structure $\DD$ on $H$, then we say that \textit{$A$ weakly admits a simple surgery (with respect to $\DD$)}.
\end{definition}

Proposition~\ref{prop:g implies ss} and~\ref{prop:p and ss} are the key steps for the proof of Theorem~\ref{thm:g implies p} and~\ref{thm:vg implies vp}.

\begin{prop}\label{prop:g implies ss}
Suppose $A$ is a geometric and diskbusting $1$--submanifold of $H$ which intersects a disk structure $\DD$ transversely and minimally, such that $W(A,\DD)$ is minimal. Then $A$ weakly admits a simple surgery with respect to $\DD$.
\end{prop}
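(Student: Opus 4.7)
The plan is to apply Theorem~\ref{thm:geometric} to freely homotope $A$ into $\partial H$; this does not change $W(A,\DD)$, which remains minimal. Setting $R=\partial H\setminus\partial\DD$, and since cutting $H$ along $\DD$ gives a $3$--cell, $R$ is homeomorphic to a sphere with $2n$ boundary circles $\partial D_i^\pm$, and the arcs of $A$ in $R$ are exactly the edges of $\Gamma=W(A,\DD)$ (with endpoints on $\partial R$ identified with the vertices $a_i^\pm$). Note that no component of $A$ lies entirely in $R$ (else it would read the empty word), so $A\cap R$ consists only of arcs with endpoints on $\partial R$.

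To produce a simple surgery I would take $f(A)=2A$, realized by $A\cup A^*$ for a parallel pushoff $A^*\subseteq\partial H$, and exhibit a cycle decomposition of $\Gamma'=W(2A,\DD)$ satisfying the four conditions of Proposition~\ref{prop:ss}. The cycles $C_h$ will be the boundary components of the regions of $R\setminus(A\cup A^*)$, with each arc on $\partial D_i^\pm$ collapsed to $a_i^\pm$. Since each edge of $\Gamma'$ has exactly one region on each of its two sides in the planar surface $R$, the cycles partition $E(\Gamma')$, which is condition (ii).

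Condition (iii) will come from a direct geometric observation: if $e,e'\in C_h$ meet at $v=a_i^\epsilon$, then they are adjacent along $\partial Q$ for the region $Q$ giving $C_h$, separated by a single arc $\beta$ of $\partial D_i^\epsilon$; the continuations $\sigma(e,v),\sigma(e',v)$ of $e,e'$ past their $D_i$--crossings have endpoints on $\partial D_i^{-\epsilon}$ at the points paired with those of $\beta$, and these endpoints bound the dual arc $\beta'$ on $\partial D_i^{-\epsilon}$, which lies on the boundary of a single region $Q'$ whose cycle $C_{h'}$ therefore contains both $\sigma(e,v)$ and $\sigma(e',v)$. For condition (iv), bigon cycles arise only from the thin-strip rectangles between $A$ and $A^*$ or from quadrilateral regions of $R\setminus A$; an Euler characteristic count on $\partial H$ using $\chi(\partial H)=2-2n$ and $E=2V$ (each vertex of the graph $A\cup\partial\DD$ has valence $4$) forces $V=|A\cap\partial\DD|=2-2n$ under the all-quadrilateral hypothesis, which is impossible for $n\ge 2$, so at least one cycle is not a bigon.

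The main obstacle is condition (i), the simplicity of each $C_h$. A repeated vertex $a_i^\epsilon$ corresponds to two disjoint arcs of $\partial D_i^\epsilon$ along a single boundary component of some region of $R\setminus(A\cup A^*)$. I would aim to rule this out via minimality of $W(A,\DD)$ by a Berge--Stallings-type disk replacement: join the two arcs by a properly embedded arc inside the region and cap it off by an arc on $D_i$ to build an essential disk $D'\subseteq H$, then show that using $D'$ in place of $D_i$ strictly reduces $|E(W)|$, contradicting minimality. The delicate points here will be verifying essentiality of $D'$ using the diskbustingness of $A$, and handling regions whose boundary has more than one component by a similar disk-replacement argument.
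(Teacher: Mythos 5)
Your overall strategy coincides with the paper's: push $A$ into $\partial H$ by Theorem~\ref{thm:geometric}, double it, and read off the cycle decomposition of $W(2A,\DD)$ required by Proposition~\ref{prop:ss} from the complementary regions in the planar surface obtained by cutting along $\DD$. But the crux of the proposition is exactly the point you defer, namely condition (i), and as written your proposal has a genuine gap there. You propose to rule out a repeated vertex on a region boundary by a from-scratch disk-replacement argument whose ``delicate points'' (essentiality of $D'$, strict decrease of $|E(W)|$, regions with disconnected or non-disk closure) you acknowledge but do not resolve; this is precisely the content of Theorem~\ref{thm:stallings} combined with Lemma~\ref{lem:graph}, which the paper invokes instead: diskbusting plus minimality of $W(A,\DD)$ give that $\Gamma=W(A,\DD)$ is connected with no cut vertex, and Lemma~\ref{lem:graph} then shows every region of $S^2\setminus\Gamma$ is an open disk whose boundary is a \emph{simple} closed curve. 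Note that connectivity of $\Gamma$ is also what guarantees the regions are disks in the first place, which your Euler characteristic count for (iv) silently assumes.

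There is a second, structural error in the choice of cycles. You take the $C_h$ to be ``the boundary components of the regions of $R\setminus(A\cup A^*)$,'' and your discussion of (iv) confirms you include the thin-strip rectangles between $A$ and $A^*$. But each edge of $\Gamma'=W(2A,\DD)$ borders exactly one such rectangle and exactly one outer region, so including both makes every edge lie in two cycles and condition (ii) fails; your stated justification (``one region on each of its two sides'') proves the opposite of what you claim. The fix is to use only the outer regions, i.e.\ the components $R_h'$ of the complement of a bigon neighborhood of $\Gamma$ in $S^2$, as the paper does; their boundaries $\partial\overline{R_h'}$ do partition $E(\Gamma')$, and (iv) then follows from a clean count on $S^2$: if every $\partial\overline{R_h}$ were a bigon then $|E(\Gamma)|$ would equal the number of regions, forcing $2=\chi(S^2)=|V(\Gamma)|=2n$, impossible for $n\ge 2$. (Your count on $\partial H$ also contains a slip: the all-quadrilateral hypothesis yields $0=2-2n$, not $V=2-2n$, though the contradiction survives.)
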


\begin{proof}
	By Theorem~\ref{thm:geometric}, we may assume that $A\subseteq \partial H$.
	Write $\DD=\{D_1,\ldots,D_n\}$ and $\Gamma=W(A,\DD)$.
	Let $N$ be a closed regular neighborhood of $A$ in $\partial H$.
	$B=\partial N$ denotes the boundary of $N$ as a $2$--submanifold of $\partial H$.
	Note that $B = A+A$; that is, $B$ consists of two $1$--submanifolds each of which is freely homotopic to $A$.
	In particular, $B$ intersects $\DD$ transversely and minimally.
Since $A$ is diskbusting, $\Gamma$ is connected (Theorem~\ref{thm:stallings}); in particular, $\Gamma$ does not have any isolated vertices.
Hence, one can write $S^2\setminus \Gamma = \coprod_h R_h$ where each $R_h$ is an open disk.
	Choose a bigon neighborhood $N_1\subseteq S^2$ of $\Gamma\subseteq S^2$, so that
	$S^2\setminus N_1 = \coprod_h R_h'$ for some $R_h'\subseteq R_h$ (Fig.~\ref{fig:figbigon}).
Write $\Gamma'=W(B,\DD)$. Then we have an edge decomposition
 $\Gamma'=\cup_h \partial\overline{R_h'}\subseteq S^2$.
	Define a pairing $\sim_i$ on $B\cap D_i = B\cap \partial D_i$
	by $v\sim_i v'$ if $v$ and $v'$ are the endpoints of some interval (denoted as $\alpha_{vv'}$)
	in $\partial D_i\setminus\mathrm{int}(N)$.
	
	We claim that $(\DD,\{\sim_i\})$ determines a simple surgery on $B$.
	As in the proof of Proposition~\ref{prop:ss},
	choose a product structure on $\DD\times[-1,1]$ 
	such that $B\cap (D_i\times[-1,1])$ is identified with $(B\cap D_i)\times[-1,1]$ for each $i$.
	In particular, there exist homeomorphisms $D_i\times0\rightarrow D_i\times \pm1$
	which are compatible with the product structure.
	Whenever $v\sim_i v'$, choose $\alpha_{vv'}^+\subseteq D_i\times1$
	and $\alpha_{vv'}^-\subseteq D_i\times-1$  
	to be the image of $\alpha_{vv'}\subseteq  D_i\times 0$.
	Let 
	$B' = (B\setminus (\DD\times(-1,1)))\cup(\cup_{v\sim_i v'}\alpha_{vv'}^\pm)$.
	Then
	$\Gamma'$ is obtained from $B'$ by collapsing $\alpha_{vv'}^\pm$ onto the vertices $a_i^{\pm1}$
	for each $v$ and $v'$ in $D_i$ such that $v\sim_i v'$. 
	By this process, each loop in $B'$ collapses onto 
 	the cycle $C_h = \partial\overline{R_h'}$ for some $h$.
	In the below, let us verify the conditions (i) through (iv) of Proposition~\ref{prop:ss} with regard to the decomposition $\Gamma'=\cup_h C_h$.
	
	The condition (ii) of Proposition~\ref{prop:ss} is obvious, since $\partial\overline{R_h'}$ and $\partial\overline{R_{h'}'}$ intersect only at vertices of $\Gamma'$. Condition (iii) easily follows from the proof of ($\Rightarrow$) in Proposition~\ref{prop:ss}. By Theorem~\ref{thm:stallings}, $\Gamma$ is a connected graph without a cut vertex. By Lemma~\ref{lem:graph}, each $\partial\overline{R_h}$ is a simple cycle. This implies $C_h=\partial\overline{R_h'}\approx\partial\overline{R_h} $ is a simple cycle, and hence, we have (i).

	The condition (iv) fails only when each $\partial\overline{R_h'}$, and hence each $\partial\overline{R_h}$ also,
	is a bigon.
%	Since each region in $S^2\setminus \Gamma$ is bounded by two edges
	Since each edge is shared by two regions, the number of edges in $\Gamma$ must then be the same
	as the number of regions in $S^2\setminus\Gamma$.
	Considering $S^2$ as a CW-complex having the connected graph $\Gamma$ as its $1$--skeleton,
	we would have	$2=\chi(S^2)=|V(\Gamma)|$.  
	This contradicts to the fact that the genus of $H$ is larger than $1$.
\end{proof} 

\begin{lem}\label{lem:graph}
	Let $\Gamma\subseteq S^2$ be a connected graph without a cut vertex.
	If $R$ is a component of $S^2\setminus\Gamma$, then $\partial\overline{R}$ is a simple closed curve.
\end{lem}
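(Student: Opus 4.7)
The plan is to prove the contrapositive: if $\partial\overline{R}$ is not a simple closed curve, then $\Gamma$ has a cut vertex on $\partial\overline{R}$. First I would invoke the standard planar fact that, since $\Gamma\subseteq S^2$ is connected, each component of $S^2\setminus\Gamma$ is an open disk, and parameterize $\partial\overline{R}$ by the closed edge-walk $\omega\co S^1\rightarrow\Gamma$ obtained by tracing $\partial\overline{R}$ from inside $R$. A failure of $\partial\overline{R}$ to be a simple closed curve amounts to $\omega$ not being an embedding; the case of a repeated edge forces multiplicity at both of its endpoints, so it suffices to locate a vertex $v$ that is visited by $\omega$ more than once.

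Next I would use the planar embedding at $v$: cyclically ordering the incident edges as $e_1,\dots,e_k$, the corners between consecutive pairs $(e_s,e_{s+1})$ each belong to a single face incident to $v$. If $\omega$ visits $v$ twice, then $R$ occupies two distinct corners at $v$, say between $(e_i,e_{i+1})$ and between $(e_j,e_{j+1})$ with $i\neq j$. Using that $R$ is an open disk, I would pick a point in each of those two corners and join them by a simple arc $\beta\subseteq R$; closing $\beta$ up through $v$ by two short radial arcs lying inside the two chosen corners yields a simple closed curve $\gamma\subseteq S^2$ that meets $\Gamma$ only at $v$.

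Finally, the Jordan curve theorem lets me finish: $\gamma$ separates $S^2$ into two open disks, and the construction at $v$ places the edges $e_{i+1},\dots,e_j$ (minus the endpoint $v$) on one side and $e_{j+1},\dots,e_i$ on the other. Since $(\Gamma\setminus\{v\})\cap\gamma=\emptyset$, any path in $\Gamma\setminus\{v\}$ stays within a single component of $S^2\setminus\gamma$; hence $\Gamma\setminus\{v\}$ is disconnected and $v$ is a cut vertex, contradicting the hypothesis. The main obstacle is the careful planar-topological bookkeeping needed to produce $\gamma$ as a simple closed curve meeting $\Gamma$ only at $v$ and to verify that the two packets of edges really do land on opposite sides of $\gamma$; all of this reduces to the Jordan--Schoenflies theorem applied to $R$ and to a small disk neighborhood of $v$. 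An alternative route would be an Euler-characteristic computation on the CW-decomposition of $S^2$ cut out by $\Gamma$, but the Jordan-curve argument seems most direct.
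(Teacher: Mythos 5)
Your proof is correct and is essentially the paper's argument in contrapositive form: both proofs produce a Jordan curve through the doubly-visited boundary point that otherwise lies in $R$, and use the Jordan curve theorem to conclude that this point separates $\Gamma$, i.e.\ is a cut vertex. The only cosmetic difference is that the paper obtains this curve as the image of a properly embedded arc in the polygonal disk $\overline{Q}$ parameterizing $\overline{R}$, whereas you build it directly from an arc in $R$ joining the two corners of $R$ at $v$.
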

\begin{proof}
	Let $N(\Gamma)$ denote a closed regular neighborhood of $\Gamma$.  
	Denote by $T$ the component of $S^2\setminus N(\Gamma)$ such that $T\subseteq R$.
	Since $\Gamma$ is connected, so is $N(\Gamma)$. Hence, $N(\Gamma)$ is a punctured sphere, and $T$ is an open disk.
	By the deformation retract $N(\Gamma)\rightarrow \Gamma$,
	$\partial\overline{T}$ maps to $\partial\overline{R}$.
	So, $\partial\overline{R}$ is a closed curve and $R$ is an open disk.
	There exists a polygonal disk $\overline{Q}$ and a quotient map $q\co \overline{Q}\rightarrow\overline{R}$
	such that $\mathrm{int}(q)\co Q\rightarrow R$ is a homeomorphism
	and $\partial q\co\partial\overline{Q}\rightarrow\partial\overline{R}$ is a graph map, where
	$\mathrm{int}(q)$ and $\partial q$ are restrictions of $q$.
	We have only to show that $\partial q$ is $1-1$.
	Suppose $q(x)=q(y)$ for some $x\ne y\in (\partial \overline{Q})^{(0)}$.
	Pick a properly embedded arc $\alpha\subseteq \overline{Q}$ joining $x$ and $y$,
	and
	write $Q\setminus\alpha = Q_1\cup Q_2$ (Fig.~\ref{fig:lem:graph} (a)).
	Since $q(\alpha)\approx S^1$, we can write $S^2\setminus q(\alpha)=A_1\cup A_2$
	such that $A_i$ is an open disk and $q(Q_i)= R\cap A_i$.
	So, $q(\alpha)$ separates $q(\partial\overline{Q_1}\setminus\alpha)\setminus q(x)$ from
	$q(\partial \overline{Q_2}\setminus\alpha)\setminus q(x)$ (Fig.~\ref{fig:lem:graph} (b)).
	Since $q(\alpha)\cap \Gamma = q(\alpha)\cap\partial\overline{R} = q(x) = q(y)$,
	$q(x)$ separates $\Gamma$. This is a contradiction.
\end{proof}

\begin{figure}[htb!]
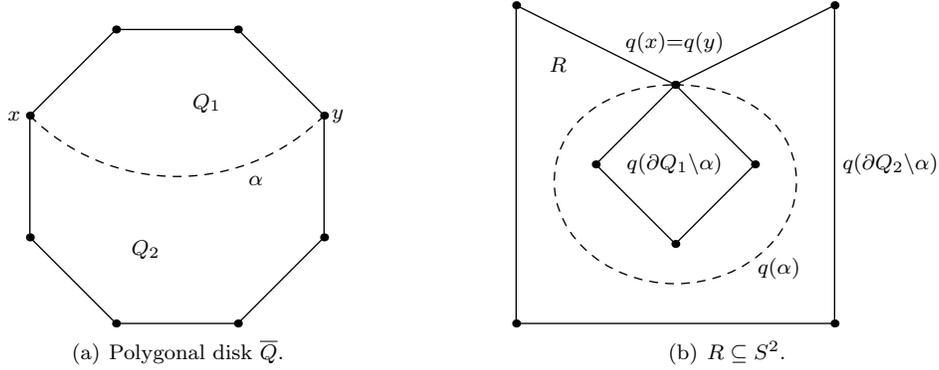

	\subfigure[Polygonal disk $\overline{Q}$.]{
	\includegraphics[]{fig/figcutvertex.1}
	}
\hfill
	\subfigure[$R\subseteq S^2$.]{
	\includegraphics[]{fig/figcutvertex.2}
	}
	\caption{Proof of Lemma~\ref{lem:graph}.
	\label{fig:lem:graph}}
\end{figure}

\begin{prop}\label{prop:p and ss}
Let $U\subseteq F$ be an  independent, diskbusting set of root-free, cyclically reduced words.
If a $1$--submanifold $A\subseteq H$ realizes $U$ with respect to a disk structure $\DD$, then the following are equivalent.
\begin{enumerate}
\item
$U$ is polygonal.
\item
$A$ weakly admits a simple surgery with respect to $\DD$.
\end{enumerate}
\end{prop}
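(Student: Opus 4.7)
The plan is to use Proposition~\ref{prop:ss} to translate condition (2) into a combinatorial statement about Whitehead graphs, and then set up a bijection between $U$-polygonal surfaces and the cycle decompositions appearing there. Concretely, condition (2) is equivalent to the existence of a non-zero polynomial $f(x_1,\ldots,x_r)=\sum c_{ij} x_i^j$ and a decomposition $W(f(U))=\bigcup_h C_h$ into simple cycles satisfying conditions (i)--(iv) of Proposition~\ref{prop:ss}. I will show that this data is essentially the same as a $U$-polygonal surface $S$ with $\chi(S)<m(S)$, with cycles $C_h$ corresponding to the \emph{vertices} of $S$ (via the link) and the number of polygonal disks being $m=\sum c_{ij}$.

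For $(1)\Rightarrow(2)$, given a $U$-polygonal surface $S=\coprod_k P_k/\!\!\sim$ with $\partial P_k$ reading $w_{i_k}^{j_k}$ and $\chi(S)<m$, define $f$ by $c_{ij}=\#\{k:(i_k,j_k)=(i,j)\}$. The corners of each $P_k$ are in natural bijection with the edges of $W(w_{i_k}^{j_k})\subseteq W(f(U))$: a corner between the letters $v_\ell,v_{\ell+1}$ corresponds to the edge joining $v_\ell$ and $v_{\ell+1}^{-1}$. For each vertex $v$ of $S$, reading the corners around the link of $v$ in cyclic order yields a cyclic sequence of edges of $W(f(U))$, which I take to be $C_v$. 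Conditions (i)--(ii) are immediate since corners around $v$ are distinct and each corner lies at exactly one vertex of $S$. Condition (iii) follows because consecutive corners of the same polygon $\partial P_k$ are related by $\sigma$, while side-paired edges in $S$ match the corners at the two endpoints of the resulting edge in $S$; thus cycles meeting at a vertex of $W$ correspond via $\sigma$ to cycles at the neighboring vertex of $S$. Finally, writing $d_v$ for the valence of $v$ in $S^{(1)}$ (which equals the length of $C_v$), a standard Euler computation gives
\[
\chi(S)-m(S)=-\tfrac{1}{2}\sum_{v\in S^{(0)}}(d_v-2),
\]
so $\chi(S)<m$ is equivalent to some $d_v>2$, matching (iv).

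For $(2)\Rightarrow(1)$, I reverse this construction: start with a decomposition $W(f(U))=\bigcup_h C_h$ satisfying (i)--(iv), take $m=\sum c_{ij}$ polygonal disks $P_k$ whose boundaries read the prescribed powers $w_{i_k}^{j_k}$, and declare that two edges $e\in E(\partial P_k)$ and $e'\in E(\partial P_{k'})$ (necessarily carrying the same letter) are identified when the pairs of corners adjacent to $e$ and $e'$ appear as consecutive edges of some $C_h$ and $C_{h'}$. Condition (iii) is precisely the consistency needed so that each edge of $\partial P_k$ is paired with a unique edge of some $\partial P_{k'}$ whose label matches, yielding a legitimate side-pairing. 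Since cycles are simple, no identification forces two consecutive edges of a polygon to be glued fixing their common vertex, and the resulting $S^{(1)}$ immerses into $\cay(F)/F$. The vertices of $S$ are then in bijection with the cycles $C_h$, valence equals cycle length, and (iv) together with the Euler computation above gives $\chi(S)<m$, so $S$ is a $U$-polygonal surface witnessing (1).

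The main obstacle I anticipate is the careful bookkeeping in the bijection between corners, sides and edges of $W$, especially checking that condition (iii) of Proposition~\ref{prop:ss} is exactly what is needed to well-define the side-pairing at each edge of $S$ and to guarantee that the vertex-stars assemble into a surface rather than a branched or singular $2$-complex; this amounts to verifying that the two corners at the two endpoints of an identified pair of edges belong to cycles $C_h,C_{h'}$ that are exchanged consistently by $\sigma$ at the two vertices of $W$ hit by the letter labelling that edge. Once this is set up, both directions are essentially bookkeeping, and the Euler characteristic identity identifies the quantitative conditions $\chi(S)<m$ and ``not every $C_h$ is a bigon.''
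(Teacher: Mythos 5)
Your proposal follows essentially the same route as the paper: the bijection between corners of the polygonal disks and edges of $W(f(U))$, the identification of vertex links with the cycles $C_h$, the use of condition (iii) of Proposition~\ref{prop:ss} to encode the connecting map $\sigma$, and the Euler characteristic identity matching $\chi(S)<m(S)$ with the ``not all bigons'' condition. The only slight imprecision is your justification of condition (i) in the forward direction --- simplicity of $C_v$ (no repeated \emph{vertices} of the Whitehead graph) comes from local injectivity of the immersion $\phi$ at $v$, not merely from the corners around $v$ being distinct --- but this is exactly the point the paper makes and the rest is correct.
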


\begin{proof}
Write $U = \{w_1,\ldots,w_r\}$ and  $A=\gamma_1\cup\ldots\cup\gamma_r$ so that
$\gamma_i$ realizes $w_i$ with respect to $\DD$.
Recall our convention that $A$ intersects $\DD$ transversely and minimally (Remark~\ref{rem:minimal}).

\textbf{(1)$\Rightarrow$(2):}
Suppose there exists a $U$--polygonal surface $S$ and an associated immersion $\phi\co S^{(1)}\rightarrow\cay(F)/F$ satisfying $\chi(S)<m(S)$.
For $i,j\ge1$, let $c_{ij}$ denote the number of polygonal disks $P$ on $S$ such that the composition of immersions $\partial P\to S^{(1)}\stackrel{\phi}\to\cay(F)/F$ reads $w_i^j$.
In particular, $m(S) = \sum_{i,j} c_{ij}$.
One can write $S = \coprod_i \coprod_j \coprod_{k=1}^{c_{ij}}P_{ijk}/\!\!\sim$
for some side-pairing  $\sim$ such that 
each $P_{ijk}$ is a polygonal disk whose boundary reads $w_i^j$.
Put $f(x_1,\ldots,x_r) = \sum_{i,j} c_{ij}x_i^j$
and $\Gamma = W(f(U))$.
Let $\sigma$ denote the connecting map associated with $\Gamma$.
By Remark~\ref{rem:connecting map} (2),
there is a natural $1-1$ correspondence $\rho$ between
the edges of $\Gamma$
and the corners (\textit{i.e.}, two adjacent edges) of $\coprod_{i,j,k}P_{ijk}$.
For instance, 
a vertex $v\in\partial P_{ijk}$ at which 
an $a_1$-edge is incoming and
an $a_2$-edge is outgoing
corresponds to an edge of $\Gamma$
joining $a_1$ and $a_2^{-1}$ (Fig.~\ref{fig:prop:p and ss 1}).

\begin{figure}[htb!]
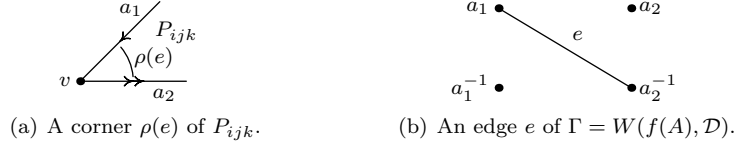

	\subfigure[A corner $\rho(e)$ of $ P_{ijk}$.]{
	\includegraphics[]{fig/figpolygonalandss.0}
	}
	\hspace{.45in}
	\subfigure[An edge $e$ of $\Gamma=W(f(A),\DD)$.]{
	\includegraphics[]{fig/figpolygonalandss.1}
	}
	\caption{The $1-1$ correspondence $\rho$.
	\label{fig:prop:p and ss 1}}
\end{figure}

Write $S^{(0)}= \{v_1,\ldots,v_t\}$.
$\link(v_h)\approx S^1$ denotes the link of $v_h$ in $S$.
$\rho^{-1}$ maps each $\link(v_h)$ to some cycle $C_h\subseteq\Gamma$. 
Since $\phi\co S^{(1)}\rightarrow \cay(F)/F$ is locally injective, each $C_h$ is a simple cycle.
This proves the condition (i) of Proposition~\ref{prop:ss}.
The condition (ii) follows from the fact
that $\rho$ is a $1-1$ correspondence.

Suppose $e$ and $e'$ are consecutive edges  in some $C_h$.
$e$ and $e'$ correspond to an adjacent pair of edges (corners) in $\link(v_h)$.
Without loss of generality, 
assume $a_q\in \partial e\cap\partial e'\subseteq V(\Gamma)$,
and let $v_{h'}\in S^{(0)}$ be the other endpoint of
the $a_q$-edge incoming at $v_h$  (Fig.~\ref{fig:prop:p and ss 2} (a)).
$\rho(e)$ is the corner of some $P_{ijk}$ at $v_h$,
and similarly, $\rho(e')$ is the corner of some $P_{i'j'k'}$ at $v_h$.
From the definition of $\sigma$, one can see that the corners of $P_{ijk}$ and $P_{i'j'k'}$ at $v_{h'}$
correspond to $f=\sigma(e,a_q)$ and $f'=\sigma(e',a_q)$, respectively.
$f$ and  $f'$ belong to some
$C_{h'}$ as consecutive edges (Fig.~\ref{fig:prop:p and ss 2} (b)). 
This proves the condition (iii) of Proposition~\ref{prop:ss}.

\begin{figure}[htb!]
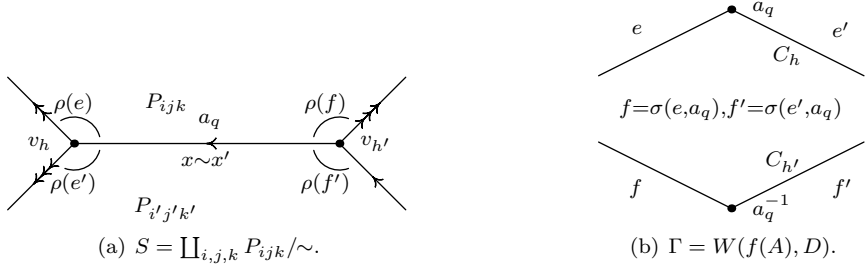

	\subfigure[$S=\coprod_{i,j,k} P_{ijk}/\!\!\sim$.]{
	\includegraphics[]{fig/figpolygonalandsss.1}
	}
	\hspace{.45in}
	\subfigure[$\Gamma=W(f(A),D)$.]{
	\includegraphics[]{fig/figpolygonalandsss.2}
	}
	\caption{Visualizing the connecting map $\sigma$.
	\label{fig:prop:p and ss 2}} 
\end{figure}

Now we will use the assumption $\chi(S)<m(S)$ as follows.
The number of vertices, edges and faces in $S=\coprod_{i,j,k} P_{ijk}/\!\!\sim$ are
$t$, $\sum_{i,j} c_{ij}|w_i^j|/2$ and $\sum_{i,j}c_{ij}$, respectively. Hence,
\begin{eqnarray*}
 \sum_{i,j}c_{ij} = m(S) &>& \chi(S) = t - \sum_{i,j} c_{ij} |w_i^j|/2 + \sum_{i,j}c_{ij},\\
2t &<& \sum_{i,j} c_{ij} |w_i^j| = |E(\Gamma)|=\sum_{h=1}^t |E(C_h)|.
\end{eqnarray*}

It follows that $|E(C_h)|>2$ for some $h$; that is, the condition (iv) of Proposition~\ref{prop:ss} is satisfied.

\textbf{(2)$\Rightarrow$(1):}
Suppose $f(x_1,\ldots,x_r) = \sum_{i,j\ge1}c_{ij} x_i^j$
is a non-zero integral polynomial with $c_{ij}\ge0$ such that there is a simple surgery on
$f(A)=f(\gamma_1,\ldots,\gamma_r)$.
Write $\Gamma=W(f(A),\DD)=\cup_{h=1}^t C_h$ so that the conditions in Proposition~\ref{prop:ss}
are satisfied.
For each $i,j$ and $1\le k\le c_{ij}$, take a polygonal disk $P_{ijk}$ equipped with an immersion
$\partial P_{ijk}\rightarrow\cay(F)/F$ reading $w_i^j$; so, each edge in $\coprod_{i,j,k} \partial P_{ijk}$
carries a label by $\SS=\{a_1,\ldots,a_n\}$ as well as an orientation
induced from the orientations of the edges in $\cay(F)/F$.
There is a $1-1$ correspondence $\rho$ between
the edges of $\Gamma$ and 
the corners of $\coprod_{i,j,k} P_{ijk}$ as in the proof of
(1)$\Rightarrow$(2).
Let $1\le q\le n$, and consider two $a_q$-edges $x\in E(\partial P_{ijk})$ and $x'\in \partial E(P_{i'j'k'})$.
We declare that $x\sim x'$ if 
$\rho^{-1}$ sends the corners at the terminal vertices of $x$ and $x'$
to consecutive edges $e$ and $e'$ in some $C_h$ 
(Fig.~\ref{fig:prop:p and ss 2} (a)).
By the condition (ii) of Proposition~\ref{prop:ss},
$e\in E(\Gamma)$ and $a_q\in \partial e$ uniquely determines $e'\in E(\Gamma)$ such that $e$ and $e'$ are consecutive at $a_q$ in some $C_h$.
By the condition (iii), 
the other corners of $x$ and $x'$ correspond to
consecutive edges
$f=\sigma(e,a_q)$
and $f'=\sigma(e',a_q)$ in some $C_{h'}$.
Hence, $\sim$ defines a side-pairing on $\coprod_{i,j,k} P_{ijk}$
that matches the labels and the orientations of the edges; moreover,
$\rho$ determines a $1-1$ correspondence between the links of the vertices in the closed surface $S=\coprod_{i,j,k}P_{ijk}/\!\!\sim$ and the cycles $C_1,C_2,\ldots,C_t$.
Since each $C_h$ is simple, 
there is an immersion $S^{(1)}\rightarrow \cay(F)/F$ induced by
the given immersion  $\coprod_{i,j,k}\partial P_{ijk}\rightarrow\cay(F)/F$.
Note that $m(S)=\sum_{i,j}c_{ij}$.
As in the proof of (1)$\Rightarrow$(2),
\[\chi(S)= t - |E(\Gamma)|/2 + m(S)
		= t - \sum_{h=1}^t |E(C_h)|/2 + m(S).\]
By the condition (iv) of Proposition~\ref{prop:ss},
$|E(C_h)|>2$ for some $h$. Therefore, $\chi(S)<m(S)$.
\end{proof}

\begin{theorem}\label{thm:g implies p}
Suppose $U\subseteq F$ is an  independent, diskbusting set of root-free, cyclically reduced words.
If $U$ is geometric, then $U$ is equivalent to a polygonal set of words in $F$.
\end{theorem}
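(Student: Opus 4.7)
The plan is to reduce the theorem to the two key Propositions \ref{prop:g implies ss} and \ref{prop:p and ss} by replacing $U$ with an equivalent set whose Whitehead graph is minimal. All the hard work has already been encoded in those propositions; the task is to verify that the hypotheses apply simultaneously.

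First, I would replace $U$ by an equivalent set $U'$ of minimal total word length. Concretely, among all $\phi(U)$ with $\phi\in\aut(F)$, choose one minimizing $\sum_{w\in\phi(U)}|w|$, and then replace each word by a cyclic conjugate so that every word is cyclically reduced; this minimum is achieved because word lengths are non-negative integers. Call the result $U'$. Since the properties of being independent, diskbusting, root-free, and geometric are invariant under $\aut(F)$ (equivalently, under a self-homeomorphism of $H$ realizing $\phi$), the set $U'$ inherits all of these properties from $U$. Realize $U'$ by a $1$-submanifold $A\subseteq H$ that intersects the fixed disk structure $\DD$ transversely and minimally. Because $U'$ is geometric, $A$ is (freely homotopic to) a geometric $1$-submanifold, so without loss of generality we may take it to be geometric in the sense of Proposition~\ref{prop:g implies ss}.

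Next I would verify that $W(A,\DD)$ is minimal in the sense defined before Theorem~\ref{thm:stallings}. Any pair $(A'',\DD'')$ with $A''$ freely homotopic to $A$ and $A''$ intersecting $\DD''$ transversely and minimally corresponds (after pulling $\DD''$ back to $\DD$ by a homeomorphism of $H$, or equivalently applying an automorphism of $F$) to a set of words equivalent to $U'$; the number of edges of the Whitehead graph equals the total cyclic word length. Hence the minimality of $U'$ among equivalents translates directly into the minimality of $W(A,\DD)$.

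With $A$ geometric, diskbusting, and $W(A,\DD)$ minimal, Proposition~\ref{prop:g implies ss} applies and $A$ weakly admits a simple surgery with respect to $\DD$. Then, since $U'$ is an independent, diskbusting set of root-free cyclically reduced words realized by $A$ with respect to $\DD$, Proposition~\ref{prop:p and ss} yields that $U'$ is polygonal. Therefore $U$ is equivalent to the polygonal set $U'$, completing the argument.

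The main conceptual obstacle is bookkeeping: I have to be precise that applying an automorphism $\phi\in\aut(F)$ to the word data is equivalent to applying a self-homeomorphism of $H$ (or changing the disk structure), so that geometricity, diskbustingness, root-freeness, and the definition of minimality all transfer correctly between the word side and the $1$-submanifold side. Once this correspondence is set up cleanly, the theorem is an immediate concatenation of the two propositions.
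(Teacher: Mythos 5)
Your proposal is correct and follows essentially the same route as the paper: pass to a minimal representative of the equivalence class, use the word--submanifold dictionary to see that $W(A,\DD)$ is then minimal, and concatenate Propositions~\ref{prop:g implies ss} and~\ref{prop:p and ss}. The only cosmetic difference is that the paper also invokes Theorem~\ref{thm:geometric} at this point to place $A$ in $\partial H$, which is not strictly needed since Proposition~\ref{prop:g implies ss} already assumes only geometricity.
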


\begin{proof}
We may assume that $U$ is minimal, by applying an automorphism of $F$ if necessary. Let $U=\{w_1,\ldots,w_r\}$ be realized by $A\subseteq H$ with respect to a disk structure $\DD$. By Theorem~\ref{thm:geometric}, we can choose $A$ so that $A\subseteq \partial H$ and $W(A,\DD)$ is minimal. Proposition~\ref{prop:g implies ss} implies that $A$ weakly admits a simple surgery with respect to $\DD$. Hence, $U$ is polygonal by Proposition~\ref{prop:p and ss}.
\end{proof}

Now we consider virtually polygonal words (Definition~\ref{defn:vp}). Note that polygonal words are virtually polygonal. The converse is not true. Actually, there exists a non-polygonal word which becomes polygonal only after an automorphism of $F$ is applied. An example given in~\cite{Kim:2009p3867} is $w=abab^2ab^3\in F_2=\langle a,b\rangle$. While an elementary argument shows that $w$ is not polygonal, the automorphism $\phi$ defined by $\phi(a)=ab^{-2}$ and $\phi(b)=b$ maps $w$ to a Baumslag--Solitar relator $w'= a(a^2)^b$. Any Baumslag--Solitar relator $a^p(a^q)^b$ is polygonal for $pq\ne0$~\cite{Kim:2009p3867}.

\begin{definition}\label{defn:virtual ss}
A 1-submanifold $A\subseteq H$ \textit{virtually admits a simple surgery} if there exists a finite cover $p:H'\rightarrow H$ such that $p^{-1}(A)\subseteq H'$ weakly admits a simple surgery.
\end{definition}

\begin{prop}\label{prop:virtual p and ss}
Let $w\in F$ be a root-free and diskbusting word realized by a loop $\gamma\subseteq H$. Then $w$ is virtually polygonal if and only if $\gamma$ virtually admits a simple surgery.
\end{prop}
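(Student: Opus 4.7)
My plan is to reduce Proposition~\ref{prop:virtual p and ss} to Proposition~\ref{prop:p and ss} via a covering-space dictionary. A finite-index subgroup $F'\le F$ determines a finite cover $p\co H'\to H$ of handlebodies with $\pi_1(H')\cong F'$, and a free basis $\SS'$ of $F'$ is dual to a disk structure $\DD'$ on $H'$. The components of $p^{-1}(\gamma)$ are in bijection with the orbits of the left-multiplication action of $w$ on $F/F'$. A short computation, essentially the one carried out just before Definition~\ref{defn:vp}, shows that these orbits are in bijection with the $F'$-conjugacy classes in $\underw_{F'}$. Thus the components of $p^{-1}(\gamma)$ are in bijection with the elements of $\hatw_{F'}$, and the component corresponding to the coset $gF'$ reads, with respect to $\DD'$, a cyclic conjugate of $(w^{n_g})^g\in F'$ written in $\SS'$. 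After a small free homotopy to achieve minimal intersection with $\DD'$, the $1$-submanifold $p^{-1}(\gamma)$ realizes $\hatw_{F'}$ with respect to $\DD'$.

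To invoke Proposition~\ref{prop:p and ss}, I need that $\hatw_{F'}\subseteq F'$ is an independent, diskbusting set of root-free, cyclically reduced words. Independence is the content of the paragraph preceding Definition~\ref{defn:vp}, and cyclic reducedness is built into the choice of transversal. Root-freeness in $F'$ is the new nontrivial point: suppose $(w^{n_g})^g=y^k$ with $y\in F'$ and $k>1$. In $F$, $y$ commutes with $w^g$, so cyclicity of centralizers in free groups together with the root-freeness of $w^g$ gives $y\in\langle w^g\rangle$, say $y=(w^g)^m$ with $m>0$. The condition $y\in F'$ forces $n_g\mid m$ by the definition of $n_g$, and $mk=n_g$ then yields $k=1$, a contradiction. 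Diskbusting-ness of $\hatw_{F'}$ in $F'$ follows from diskbusting-ness of $w$ in $F$ by a standard disk-surgery argument: an essential disk in $H'$ disjoint from $p^{-1}(\gamma)$ can be pushed down and, after eliminating its self-intersections inside $H\setminus\gamma$, converted into an embedded essential disk in $H$ disjoint from $\gamma$, contradicting diskbusting-ness of $w$.

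With these hypotheses in place, Proposition~\ref{prop:p and ss} applied to $\hatw_{F'}$ and $p^{-1}(\gamma)$ with respect to $\DD'$ yields that $\hatw_{F'}$ is polygonal in $\SS'$ if and only if $p^{-1}(\gamma)$ weakly admits a simple surgery with respect to $\DD'$. Quantifying existentially over $(F',\SS')$ on the left gives virtual polygonality of $w$, and quantifying over $(H',\DD')$ on the right gives that $\gamma$ virtually admits a simple surgery, completing both implications. The main obstacle I foresee is the diskbusting preservation step, which requires a careful disk-surgery argument that keeps the disk disjoint from $\gamma$ throughout; the root-freeness verification, though short, is also non-obvious and relies on the free-group centralizer computation in an essential way, while the remainder is routine covering-space bookkeeping.
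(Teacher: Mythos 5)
Your proposal is correct and follows the same route as the paper: realize the transversal $\hatw_{F'}$ by $p^{-1}(\gamma)$ with respect to a disk structure $\DD'$ dual to $\SS'$ and apply Proposition~\ref{prop:p and ss}. The paper's proof is a four-line version of this that leaves the verification of the hypotheses of Proposition~\ref{prop:p and ss} (root-freeness and diskbusting-ness of $\hatw_{F'}$ in $F'$) implicit; your centralizer argument for root-freeness and the disk-surgery argument for diskbusting-ness correctly supply what is omitted.
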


\begin{proof}
Let $F'$ be a finite-index subgroup of $F$ and $\SS'$ be a free basis for $F'$. Denote by $\hatw_{F'}$ a transversal for $\underw_{F'}$, so that $\hatw_{F'}$ is an independent set of cyclically reduced words written in $\SS'$. $\hatw_{F'}$ is realized by $p^{-1}(\gamma)$ with respect to some disk structure $\DD'$ on $H'$. By Proposition~\ref{prop:p and ss}, $\hatw_{F'}$ is polygonal with respect to $\SS'$ if and only if $p^{-1}(\gamma)$ weakly admits a simple surgery with respect to $\DD'$.
\end{proof}

\begin{theorem}\label{thm:vg implies vp}
A diskbusting and virtually geometric word in $F$ is virtually polygonal.
\end{theorem}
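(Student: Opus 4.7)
The plan is to reduce the statement directly to Theorem~\ref{thm:g implies p} applied inside the finite cover provided by virtual geometricity. Let $\gamma\subseteq H$ realize $w$, and let $p\co H'\rightarrow H$ be a finite cover such that $p^{-1}(\gamma)$ is freely homotopic to a $1$--submanifold on $\partial H'$; set $F'=\pi_1(H')$. After freely homotoping, the preimage $A'=p^{-1}(\gamma)\subseteq\partial H'$ realizes a transversal $\hatw_{F'}$ for $\underw_{F'}$ with respect to some disk structure $\DD'$ on $H'$, and $A'$ is geometric in $H'$ by construction.

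To invoke Theorem~\ref{thm:g implies p} on $\hatw_{F'}$ in $F'$, I need $\hatw_{F'}$ to be an independent, diskbusting set of root-free cyclically reduced words. Independence is already established in the paragraph preceding Definition~\ref{defn:vp}. Cyclic reducedness is part of the standing convention via cyclic conjugation. Root-freeness of each $(w^{n_g})^g$ follows from root-freeness of $w$, which itself follows from $w$ being diskbusting (a proper power could be placed in a proper free factor iff its root could). The one serious ingredient is showing that $\hatw_{F'}$ is diskbusting in $F'$.

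This diskbustingness claim is the main obstacle and the only nontrivial input beyond the machinery already developed. The approach is as follows. Suppose, towards a contradiction, that $F'=F_1'\ast F_2'$ is a proper free splitting into which every element of $\hatw_{F'}$ conjugates into one factor. Geometrically, this corresponds to an essential disk $D'\subseteq H'$ disjoint from $A'$ up to free homotopy. The plan is then to use $F$-equivariance: either directly, by arguing that the $F$-orbit of a suitable preimage disk descends to an essential disk in $H$ missing $\gamma$; or algebraically, by letting $F$ act on the Bass--Serre tree associated to the splitting of $F'$ induced by all conjugates, and applying a Kurosh-type decomposition to show that $w$ must lie in a proper free factor of $F$. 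Either route yields the required contradiction with diskbustingness of $w$.

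With diskbustingness of $\hatw_{F'}$ in hand, Theorem~\ref{thm:g implies p} provides $\phi\in\aut(F')$ such that $\phi(\hatw_{F'})$ is polygonal in $\SS'$; equivalently, $\hatw_{F'}$ is polygonal as a set of cyclically reduced words written in the free basis $\phi^{-1}(\SS')$ of $F'$. By Definition~\ref{defn:vp}, $w$ is virtually polygonal, completing the proof. In summary: the theorem is essentially Theorem~\ref{thm:g implies p} relativized to the cover witnessing virtual geometricity, with the descent of diskbustingness from $w\in F$ to $\hatw_{F'}\subseteq F'$ being the lone step requiring separate justification.
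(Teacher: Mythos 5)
Your proof is correct and takes essentially the same route as the paper's: pass to the cover witnessing geometricity, note that diskbustingness descends to the set of elevations (the paper likewise only asserts this step as ``elementary''), and then run the geometric-implies-polygonal machinery in $F'$ --- your appeal to Theorem~\ref{thm:g implies p} simply packages the paper's direct use of Propositions~\ref{prop:g implies ss} and~\ref{prop:virtual p and ss}, and its conclusion (polygonal after an automorphism of $F'$) is exactly what Definition~\ref{defn:vp} requires. One small correction: diskbusting does \emph{not} imply root-free (the square of a diskbusting word is still diskbusting), so the reduction to the root-free case should instead invoke Definition~\ref{defn:polygonal}(2), under which a proper power is polygonal by fiat and hence trivially virtually polygonal.
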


\begin{proof}
	Let $w$ be diskbusting and virtually geometric. We may assume $w$ is root-free and cyclically reduced. Let $\gamma\subseteq H$ realize $w$ with respect to a given disk structure $\DD$. Suppose $p:H'\rightarrow H$ is a finite cover such that $p^{-1}(\gamma)$ is freely homotopic to $A\subseteq\partial H'$. It is elementary to see that $A$ is also diskbusting in $H'$. By Proposition~\ref{prop:g implies ss}, $A$ weakly admits a simple surgery with respect to some disk structure $\DD'$ on $H'$. By Proposition~\ref{prop:virtual p and ss}, $w$ is virtually polygonal.
\end{proof}

Theorem~\ref{thm:vp} underlines the importance of virtual polygonality.

\begin{theorem}\label{thm:vp}
If a root-free word $w\in F$ is virtually polygonal, then $D(w)$ contains a hyperbolic surface group.
\end{theorem}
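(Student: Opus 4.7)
The plan is to reduce to Theorem~\ref{thm:polygonal} by passing to a finite covering. Since $w$ is root-free and virtually polygonal, fix a finite-index subgroup $F' \le F$ and a free basis $\SS'$ for $F'$ such that some transversal $\hatw_{F'}$ for $\underw_{F'}$ is polygonal when written in $\SS'$. I would first apply Theorem~\ref{thm:polygonal} to $\hatw_{F'}$ to produce a hyperbolic surface subgroup of $D(\hatw_{F'})$, and then identify $D(\hatw_{F'})$ with a finite-index subgroup of $D(w)$.

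To invoke Theorem~\ref{thm:polygonal} I must verify that $\hatw_{F'}$ is independent, cyclically reduced, root-free, and polygonal. Three of these follow respectively from the discussion preceding Definition~\ref{defn:vp}, the definition of the transversal, and the hypothesis; only root-freeness of each $\hat{w}_i \in \hatw_{F'}$ requires work. Write $\hat{w}_i = (w^{n_h})^h = (w^h)^{n_h}$ for the element $h \in F$ representing the corresponding coset in $F/F'$. If $u^k = \hat{w}_i$ in $F'$ with $k \ge 2$, then $u$ centralizes $\hat{w}_i$ in $F$, so $u \in \langle w^h\rangle$ (the centralizer in the ambient free group, using that $w$ and hence $w^h$ is root-free). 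Writing $u = (w^h)^m$ with $m \ge 1$, the inclusion $u = (w^m)^h \in F'$ together with the minimality of $n_h$ forces $n_h \mid m$, so $u = \hat{w}_i^{m/n_h}$; then $u^k = \hat{w}_i$ gives $km = n_h$, which combined with $n_h \mid m$ yields $k = 1$, a contradiction. Theorem~\ref{thm:polygonal} therefore yields a hyperbolic surface subgroup $\Sigma \le D(\hatw_{F'})$.

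To conclude, I would identify $D(\hatw_{F'})$ with a finite-index subgroup of $D(w)$ using the covering $Y(\{w\}, F') \to X(w)$ recalled in Section~\ref{subsec:polygonality}. Viewing $Y(\{w\}, F')$ as a graph of spaces whose vertex spaces are two copies of $\cay(F)/F'$ and whose edge spaces are cylinders attached along the elevations $\tilde\gamma_j$ of the loop reading $w$, van Kampen's theorem expresses $\pi_1(Y(\{w\}, F'))$ as an iterated amalgam of two copies of $F' \cong \pi_1(\cay(F)/F')$ along the cyclic subgroups generated by the $\tilde\gamma_j$. Under a natural homotopy equivalence between $\cay(F)/F'$ and the bouquet of $|\SS'|$ circles inducing the identity on $F'$ via $\SS'$, these cyclic subgroups match those generated by the elements of $\hatw_{F'}$; hence $\pi_1(Y(\{w\}, F')) \cong D(\hatw_{F'})$, and this group embeds with finite index in $D(w) = \pi_1(X(w))$. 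Therefore $\Sigma \le D(\hatw_{F'}) \le D(w)$, as required. The step I expect to demand the most care is precisely this final identification, where one must match the equivalence classes of based elevations of the loop reading $w$ with the conjugacy classes in $\underw_{F'}$ compatibly with the attaching data on both sides.
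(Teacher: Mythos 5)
Your proof is correct and follows essentially the same route as the paper: apply Theorem~\ref{thm:polygonal} to the polygonal transversal $\hatw_{F'}$, then identify $D(\hatw_{F'})$ with the finite-index subgroup $\pi_1(Y(w,F'))$ of $D(w)$ via the homotopy equivalence between $Y(w,F')$ and $X_{\SS'}(\hatw_{F'})$. Your explicit verification that each element of $\hatw_{F'}$ is root-free in $F'$ (using the centralizer of $w^h$ and the minimality of $n_h$) is a hypothesis of Theorem~\ref{thm:polygonal} that the paper leaves implicit, and your argument for it is correct.
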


\begin{proof}
	We may assume $w$ is cyclically reduced by applying an automorphism of $F=\langle \SS\rangle$ if necessary.
    Let $\gamma_w\subseteq\cay(F)/F$ read $w$.
	Let $F'$ be a finite-index subgroup of $F$ such that
	a transversal $\hatw_{F'}$ for $\underw_{F'}$ is polygonal
	with respect to some free basis $\SS'$ for $F'$.
	Recall that $X_{\SS'}(\hatw_{F'})$ denote the 2--dimensional CW-complex
	obtained by taking two copies of $\cay_{\SS'}(F')/F'$ and gluing cylinders along
the loops reading $\hatw_{F'}$ considered as a set of words written in $\SS'$.
	By Theorem~\ref{thm:polygonal},
	$D(\hatw_{F'})=\pi_1(X_{\SS'}(\hatw_{F'}))$ contains a hyperbolic surface group.
	As was introduced in Section~\ref{sec:preliminary}, 
	$Y(w,F')$ denotes the finite cover of $X_\SS(w)$
	obtained by taking two copies of $\cay(F)/F'$ and gluing cylinders along the copies of the elevations of $\gamma_w$.
	Since the homotopy equivalence $\cay(F)/F'\rightarrow \cay_{\SS'}(F')/F'$
	maps each elevation of $\gamma_w$
	to a loop realizing an element in $\hatw_{F'}$,
	we have a homotopy equivalence	
	 $Y(w,F')\simeq X_{\SS'}(\hatw_{F'})$.
	Hence $D(w)\ge \pi_1(Y(w,F')) = D(\hatw_{F'})$ and so, $D(w)$ contains a hyperbolic surface group.
\end{proof}

\begin{cor}[Gordon--Wilton~\cite{Gordon:2009p360}]\label{cor:vp}
If $w\in F$ is root-free, virtually geometric and diskbusting, then $D(w)$ contains a hyperbolic surface group.\qed
\end{cor}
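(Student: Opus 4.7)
The plan is to derive this corollary as an immediate consequence of the two main theorems that have just been established, since the hypothesis ``virtually geometric and diskbusting'' is exactly the input format of Theorem~\ref{thm:vg implies vp}, and the output ``virtually polygonal'' together with root-freeness is exactly the input of Theorem~\ref{thm:vp}. So the corollary should just be a two-step chain.

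First, I would invoke Theorem~\ref{thm:vg implies vp} on $w$. The hypothesis that $w$ is diskbusting and virtually geometric is given, so that theorem directly yields that $w$ is virtually polygonal; that is, there exist a finite-index subgroup $F'\le F$ and a free basis $\SS'$ of $F'$ such that a transversal $\hatw_{F'}$ for $\underw_{F'}$ is polygonal as a set of cyclically reduced words written in $\SS'$. At this stage, the root-freeness hypothesis of the corollary has not been used yet, but it is needed for the next step.

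Next, I would feed the root-free, virtually polygonal word $w$ into Theorem~\ref{thm:vp}. That theorem directly concludes that $D(w)$ contains a hyperbolic surface group, which is exactly the statement of the corollary.

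Since both Theorem~\ref{thm:vg implies vp} and Theorem~\ref{thm:vp} are already established earlier in the section, there is no genuine obstacle here; the only ``check'' is to confirm that the hypotheses line up, which they do exactly (diskbusting and virtually geometric feed the first theorem; the resulting virtual polygonality plus the given root-freeness feed the second). Thus the proof is essentially a one-line composition, and the \qed in the statement is warranted.
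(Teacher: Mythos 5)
Your proposal is correct and matches the paper's intended argument exactly: the corollary is obtained by composing Theorem~\ref{thm:vg implies vp} (diskbusting and virtually geometric implies virtually polygonal) with Theorem~\ref{thm:vp} (root-free and virtually polygonal implies $D(w)$ contains a hyperbolic surface group), which is precisely why the paper marks the statement with a \qed and gives no separate proof.
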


Converses of Theorem~\ref{thm:g implies p} and~\ref{thm:vg implies vp}
do not hold. Actually, we will prove the following theorem in Section~\ref{sec:nvg}.

\begin{theorem}\label{thm:p but not vg}
	There exist polygonal words which are not virtually geometric.
\end{theorem}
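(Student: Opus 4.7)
The plan is to exhibit explicit words that are polygonal but fail to be virtually geometric. Following the preview in the introduction, I would take Manning's word $w_1 = bbaaccabc \in F_3$ and the related word $w_2 = aabbacbccadbdcdd \in F_4$. Manning~\cite{Manning:2009p3177} already showed that $w_1$ is not virtually geometric, by analysing the Whitehead graphs of its elevations in all finite covers of $H$ and showing that none can be made planar. I would first verify that his argument carries over verbatim to $w_2$, since the introduction asserts this without giving separate details.

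The remaining work is to certify that each $w_i$ is polygonal. Rather than trying to guess a $\{w_i\}$--polygonal surface directly, the natural route is to use Proposition~\ref{prop:p and ss}: realize $w_i$ by a loop $\gamma_i \subseteq H$ meeting a disk structure $\DD$ transversely and minimally, and show that $\gamma_i$ weakly admits a simple surgery. By Proposition~\ref{prop:ss}, this is equivalent to finding a non-zero integral polynomial $f(x) = \sum_{j \geq 1} c_j x^j$ and a decomposition $W(f(w_i)) = \bigcup_h C_h$ into simple cycles satisfying conditions (i)--(iv) of that proposition. The crucial constraint is (iii): at every vertex $v$, whenever two edges $e,e'$ of some $C_h$ meet at $v$, the pair $\sigma(e,v), \sigma(e',v)$ must be consecutive edges in a (possibly different) cycle $C_{h'}$, where $\sigma$ is the connecting map determined by the cyclic word structure.

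Concretely, for $w_1$ I would compute $W(w_1)$, which has vertex set $\{a^{\pm1}, b^{\pm1}, c^{\pm1}\}$ and nine edges indexed by the length-two cyclic subwords of $w_1$, then search by hand for a valid cycle decomposition, passing if necessary to $W(w_1^j)$ or to $W(c_1 w_1 + c_2 w_1^2 + \cdots)$ to obtain enough edges for the pairing dictated by $\sigma$ to close up consistently. The same procedure applies to $w_2$ with a larger, more elaborate Whitehead graph in $F_4$. Once the cycles are in place, Proposition~\ref{prop:p and ss} produces a polygonal surface $S$ with $m(S) = \sum_j c_j$; the required strict inequality $\chi(S) < m(S)$ is equivalent, via the Euler characteristic calculation at the end of the proof of that proposition, to the existence of at least one $C_h$ with more than two edges, which is exactly condition (iv).

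The principal obstacle is purely combinatorial: there is no general algorithm for finding such a cycle decomposition, and the connecting-map condition (iii) is rigid enough that a naive partition into cycles will almost never work. The main leverage is the freedom to multiply: taking higher powers or linear combinations of $w_i$ enlarges the edge set while leaving $\sigma$ locally determined, and this should provide enough room to build cycles that close up under $\sigma$. Once a correct decomposition for $w_1$ and $w_2$ is exhibited explicitly (this is the content of the forthcoming Proposition~\ref{prop:w1w2}), conditions (i)--(iv) can be checked mechanically, and Theorem~\ref{thm:p but not vg} follows by combining the polygonality with Manning's non-virtual-geometricity results.
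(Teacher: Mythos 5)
Your overall strategy is the right one and matches the paper's: take $w_1=bbaaccabc$ and $w_2=aabbacbccadbdcdd$, get non-virtual-geometricity from Manning, and certify polygonality by an explicit combinatorial construction. Two remarks on the first half. For $w_2$ you need not re-run Manning's argument "verbatim": the paper isolates his criterion as Theorem~\ref{thm:manning} (a $k$--valent, $k$--edge-connected, non-planar Whitehead graph for $k\ge 3$ obstructs virtual geometricity), and since $W(w_1)=K_{3,3}$ and $W(w_2)=K_{4,4}$ the criterion applies directly to both. Also, for the second half you route through Propositions~\ref{prop:ss} and~\ref{prop:p and ss} (cycle decompositions of $W(f(w_i))$ compatible with the connecting map), whereas the paper works on the other side of that equivalence: it directly writes down a side-pairing on a polygonal disk whose boundary reads $w_1^2$ (resp.\ $w_2$) and checks the immersion condition at each vertex of the quotient surface. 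The two formulations are interchangeable --- Remark~\ref{rem:prop:w1w2}~(2) records exactly the cycle decompositions your plan asks for ($W(w_1^2)$ as one $6$--cycle plus three $4$--cycles, $W(w_2)$ as four $4$--cycles) --- but the direct side-pairing check is more elementary, since it needs only Definition~\ref{defn:polygonal} and not the handlebody/simple-surgery machinery.

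The genuine gap is that you never exhibit the combinatorial certificate. You defer to "the forthcoming Proposition~\ref{prop:w1w2}" for the explicit cycle decomposition, but that proposition \emph{is} the content of the theorem: without a concrete side-pairing (or, equivalently, a concrete decomposition of $W(f(w_i))$ into simple cycles satisfying conditions (i)--(iv) of Proposition~\ref{prop:ss}), polygonality of $w_1$ and $w_2$ has not been established. This is not a step one can wave at: the existence of such a decomposition for a general diskbusting word is precisely the open Tiling Conjecture, so nothing guarantees success in advance, and the search space grows with the polynomial $f$. The paper closes this gap by producing the data explicitly --- for $w_1$, the pairing $\{1,2\},\{3,7\},\{4,16\},\{5,15\},\{6,18\},\{8,11\},\{9,14\},\{10,17\},\{12,13\}$ on the $18$ edges of a disk reading $w_1^2$, giving a surface with $4$ vertices and $\chi(S_1)=4-9+1=-4<1=m(S_1)$; for $w_2$, an analogous pairing on the $16$ edges of a disk reading $w_2$, giving $\chi(S_2)=4-8+1=-3<1$. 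Until you supply such data (for at least one of the two words) and verify the local injectivity at every vertex of the quotient, the proof is a plan rather than an argument.
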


Virtual geometricity does not imply geometricity, in general. 
For example, $w=a^2b^{-1}ab\subseteq F_2=\langle a,b\rangle$ 
is virtually geometric, but not geometric~\cite{Gordon:2009p360}.
It is not known whether virtual polygonality is strictly weaker than
polygonality up to $\aut(F)$.

\begin{que}\label{que:p and vp}
Is a virtually polygonal word equivalent to a polygonal word?
\end{que}

\begin{tilingconjecture}[\cite{Kim:2009p3867}]\label{conj:tiling}
A minimal diskbusting word in $F$ is polygonal.
\end{tilingconjecture}

While the Tiling Conjecture is not resolved yet, we propose a weaker conjecture. 

\begin{vtilingconjecture}\label{conj:vtiling}
	A diskbusting word in $F$  is virtually polygonal.
\end{vtilingconjecture}

By Theorem~\ref{thm:vp}, the Virtual Tiling Conjecture would suffice to settle the Gromov's conjecture for $D(w)$.

\begin{rem}\label{rem:summary}
Let $w\in F$ be
a diskbusting and root-free word 
realized by $\gamma\subseteq H$.
Consider the following hypotheses on $w$ and $\gamma$:
\begin{itemize}
	\item
	* : No further hypothesis on $w$.
	\item
	(V)Geom : $w$ is (virtually) geometric.
	\item
	WSS : $\gamma$ weakly admits a simple surgery.
	\item
	VSS : $\gamma$ virtually admits a simple surgery.
	\item
	EPoly: $w$ is equivalent to a polygonal word.
	\item
	VPoly : $w$ is virtually polygonal.
	\item
	DSurf : $D(w)$ contains a hyperbolic surface group.
\end{itemize}
Note that none of the above hypotheses are dependent on the choice of a free basis for $F$, or equivalently, on the choice of a disk structure on $H$. We can summarize the content of this section as the following diagram:
\[
\xymatrix{
\mathrm{Geom}\ar@{=>}@<1ex>[rr]\ar@{=>}@<-1ex>[d]
 &&\mathrm{WSS}\ar@{<=>}[r]\ar@{=>}@<-1ex>[d]\ar@{=>}@<1ex>[ll]|\times
&\mathrm{EPoly}\ar@{=>}@<-1ex>[d]
 &&\mathrm{\ast}\ar@{==>}[ll]_{(B)}\ar@{==>}[d]^{(D)}\ar@{==>}[lld]_{(C)}\\
\mathrm{VGeom}\ar@{=>}@<1ex>[rr]\ar@{=>}@<-1ex>[u]|\times
 &&\mathrm{VSS}\ar@{<=>}[r]\ar@{==>}@<-1ex>[u]_{(A)}\ar@{=>}@<1ex>[ll]|\times
&\mathrm{VPoly}\ar@{==>}@<-1ex>[u]_{(A)}\ar@{=>}@<-1ex>[rr]
 &&\mathrm{DSurf}
} 
\]
where each dashed arrow is a question or conjecture unanswered in this paper,
and each broken arrow with $\times$ in the middle means a false implication.
Question~\ref{que:p and vp}, the \hyperref[conj:tiling]{Tiling Conjecture} and the \hyperref[conj:vtiling]{Virtual Tiling Conjecture} are equivalent to 
(A), (B) and (C), respectively. (D) is equivalent to
the Gromov's conjecture for the groups of the form $D(w)$.
\end{rem}
 
\section{Proof of Theorem~\ref{thm:p but not vg}}\label{sec:nvg}
Let us set $F_3=\langle a,b,c\rangle$ and $F_4=\langle a,b,c,d\rangle$.
Throughout this section, put 
$w_1=bbaaccabc\in F_3$ and
$w_2 = aabbacbccadbdcdd\in F_4$.
A graph $\Gamma$ is \textit{$k$--valent} if the valence of each vertex is $k$.
$\Gamma$ is \textit{$k$--edge-connected} if one cannot disconnect $\Gamma$
by removing the interior of $k-1$ or fewer edges.
In~\cite{Manning:2009p3177}, Manning proved
(a stronger version of) the following theorem.

\begin{theorem}[\cite{Manning:2009p3177}]\label{thm:manning}
Let $w\in F$ be a cyclically reduced word
such that for some $k\ge 3$,
$W(w)$ is a $k$--valent, $k$--edge-connected and non-planar graph.
Then $w$ is not virtually geometric.\qed
\end{theorem}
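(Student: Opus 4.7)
The plan is to prove Theorem~\ref{thm:p but not vg} by verifying that the two words $w_1 = bbaaccabc \in F_3$ and $w_2 = aabbacbccadbdcdd \in F_4$ from Manning's work are simultaneously polygonal and not virtually geometric. The non-virtual-geometricity direction is essentially an immediate application of Theorem~\ref{thm:manning}: computing the Whitehead graph $W(w_1)$ edge by edge from the definition yields a $3$-valent graph on $6$ vertices with $9$ edges, and one verifies by inspection that it is $3$-edge-connected and contains a $K_{3,3}$ subdivision (hence is non-planar). An analogous, slightly longer check handles $W(w_2)$, which is $4$-valent, $4$-edge-connected and non-planar on $8$ vertices. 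Manning's criterion then applies to both words.

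The real content of the theorem lies in polygonality, which I will package as Proposition~\ref{prop:w1w2}. Rather than attempting a direct construction of polygonal surfaces, I will exploit the reformulation developed in Section~\ref{sec:ss}: realize each $w_i$ as a loop $\gamma_i \subseteq H$ meeting a disk structure $\DD$ transversely and minimally, and apply Proposition~\ref{prop:p and ss}. Polygonality of $\{w_i\}$ then becomes equivalent to $\gamma_i$ weakly admitting a simple surgery, and by Proposition~\ref{prop:ss} this in turn reduces to exhibiting a non-zero polynomial $f_i(x) = \sum_j c_{ij} x^j$ with $c_{ij} \ge 0$ together with a decomposition
\[
W(f_i(w_i)) = C_1 \cup C_2 \cup \cdots \cup C_t
\]
into edge-disjoint simple cycles that are locally consistent with the connecting map in the sense of condition (iii) of Proposition~\ref{prop:ss} and are not all bigons. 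Since $W(w_1)$ is $3$-regular and $W(w_2)$ is $4$-regular, any cycle decomposition induces a valence-$2$ partition at each vertex, so $f_i(x) = x$ must fail; I expect $f_i(x) = 2x$ or $f_i(x) = x^2$ to be the first workable candidates, and note that $\sigma_{2w}$ and $\sigma_{w^2}$ differ (Example~\ref{exmp:cnt}), so these two choices offer genuinely different combinatorial flexibility.

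The main obstacle is locating the cycle decomposition itself. In the geometric case one has a planar embedding of the Whitehead graph in $\partial H$ whose face cycles furnish condition (iii) automatically (this was the crux of Proposition~\ref{prop:g implies ss}), but $W(w_1)$ and $W(w_2)$ are designed to be non-planar, so no such canonical decomposition is available. My plan is to work on the handlebody side of Proposition~\ref{prop:p and ss}: rather than guess cycles in the abstract graph, I will specify a chord diagram in each disk $D_i \in \DD$ directly, prescribing the pairings $\sim_i$ on $f_i(\gamma_i) \cap D_i$ so that the resulting surgered $1$-submanifold is a disjoint union of embedded loops each of which meets $D_i \times I_{\pm 1}$ in at most one arc. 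This reduces the problem to a small finite search inside each of the three, resp.\ four, disks of $\DD$, after which condition (iv) and the Euler-characteristic inequality $\chi(S) < m(S)$ of Definition~\ref{defn:polygonal} follow by a routine count as soon as the surgered submanifold has some component meeting at least three disks of the structure.
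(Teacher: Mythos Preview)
You are proving the wrong statement. The theorem displayed is Theorem~\ref{thm:manning}, which the paper quotes from Manning's work and does \emph{not} prove: note the \texttt{\textbackslash qed} immediately after the statement, the standard signal that the result is being cited without argument. There is therefore nothing in the paper to compare your proposal against for this theorem, and your plan does not attempt to prove it either---you explicitly invoke ``Manning's criterion'' as a black box.

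What you have actually written is a proof plan for Theorem~\ref{thm:p but not vg} (and the supporting Proposition~\ref{prop:w1w2}). On that front your approach is correct in spirit but differs from the paper's. The paper proves polygonality of $w_1$ and $w_2$ by directly exhibiting side-pairings on polygonal disks: for $w_1$ it takes a single $18$-gon with boundary reading $w_1^2$ and writes down an explicit pairing of its edges, then checks the links; for $w_2$ a single $16$-gon reading $w_2$ suffices. You instead propose to go through the simple-surgery reformulation (Propositions~\ref{prop:ss} and~\ref{prop:p and ss}) and search for cycle decompositions of $W(f_i(w_i))$ or, equivalently, chord diagrams in the disks of $\DD$. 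These routes are equivalent by Proposition~\ref{prop:p and ss}, and indeed Remark~\ref{rem:prop:w1w2}(2) in the paper translates its explicit surfaces back into exactly the cycle decompositions you are looking for. Your route is more machinery-heavy but no less valid.

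One small correction to your plan: your parity argument that $f_i(x)=x$ ``must fail'' is right for $w_1$ (odd valence forces doubling) but wrong for $w_2$. Since $W(w_2)$ is $4$-valent, a decomposition into simple cycles with $f_2(x)=x$ is not obstructed by parity, and in fact the paper succeeds with $f_2(x)=x$: the $w_2$-polygonal surface comes from a single disk reading $w_2$, not $w_2^2$.
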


An example of graphs satisfying the hypothesis in Theorem~\ref{thm:manning}
is the complete bipartite graph $K_{k,k}$ for $k\ge3$.
Manning noted that $W(w_1)$ is $K_{3,3}$,
and hence, $w_1$ is not virtually geometric~\cite{Manning:2009p3177}.
Since $W(w_2)$ is $K_{4,4}$, $w_2$ is not virtually geometric, either. 
On the other hand,
 
\begin{prop}
\begin{enumerate}
\item
$w_1=bbaaccabc$ is polygonal.
\item
$w_2 = aabbacbccadbdcdd$ is polygonal.
\end{enumerate}
\label{prop:w1w2}
\end{prop}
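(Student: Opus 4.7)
The plan is to apply Proposition~\ref{prop:p and ss}, which reduces polygonality of $w_i$ to the assertion that the loop $\gamma_i \subseteq H$ realizing $w_i$ weakly admits a simple surgery. By Proposition~\ref{prop:ss}, this amounts to choosing a non-zero integral polynomial $f_i(x) = \sum_{j \ge 1} c_{ij} x^j$ with $c_{ij} \ge 0$ and exhibiting an edge-decomposition of the Whitehead graph $W(f_i(w_i))$ into simple cycles $\cup_h C_h$, compatible with the associated connecting map $\sigma$, with at least one $C_h$ not a bigon. Equivalently, one may build a $\{w_i\}$--polygonal surface $S_i$ with $\chi(S_i) < m(S_i)$ directly by assembling polygonal disks whose boundaries read powers of $w_i$ and pairing their edges so as to induce an immersion $S_i^{(1)} \to \cay(F)/F$.

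For $w_2 \in F_4$, one first records that $W(w_2) = K_{4,4}$ (as already noted in the text) and that every vertex has even degree $4$, so an edge-decomposition of $K_{4,4}$ into simple cycles is combinatorially possible with $f_2(x) = x$. Because $K_{4,4}$ is simple, every simple cycle in it has length at least $4$, so condition (iv) of Proposition~\ref{prop:ss} is automatic. The task is then to choose a concrete decomposition --- for instance into two edge-disjoint Hamiltonian $8$-cycles, or into four $4$-cycles --- and to verify the compatibility condition (iii) against $\sigma_{w_2}$, which is determined by the cyclic sequence of length-$2$ subwords of $w_2$.

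For $w_1 \in F_3$, where $W(w_1) = K_{3,3}$, every vertex has odd degree $3$, so $K_{3,3}$ alone admits no edge-decomposition into cycles and $f_1(x) = x$ cannot work. The simplest replacements are $f_1(x) = 2x$ and $f_1(x) = x^2$, each yielding a multi-graph on the same vertex set as $K_{3,3}$ with every edge doubled and every vertex of degree $6$; these two choices produce different connecting maps, as illustrated in Example~\ref{exmp:cnt}. The plan is to decompose the resulting $18$-edge multi-graph into simple cycles --- e.g.\ a single hexagonal cycle together with a collection of bigons formed from the doubled edges --- and to check both the compatibility with $\sigma_{2w_1}$ (resp.\ $\sigma_{w_1^2}$) and, via the hexagon, condition (iv). If $f_1(x) = 2x$ obstructs the matching, one falls back on $f_1(x) = x^2$ or on a larger multiple.

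The main obstacle in both cases is condition (iii) of Proposition~\ref{prop:ss}: the cyclic pairing of edges at a vertex $v$ induced by the chosen $C_h$'s must, after applying $\sigma$ edge-by-edge, produce the cyclic pairing at $v^{-1}$ induced by the $C_h$'s. This couples the cycle choices at different vertices and in general has no solution, which is precisely why polygonality is a nontrivial property. For the specific words $w_1$ and $w_2$ the verification is a finite case analysis, which I would organize by exploiting the manifest $\mathbb{Z}/3$-symmetry permuting $\{a,b,c\}$ (respectively the $\mathbb{Z}/4$-symmetry on $\{a,b,c,d\}$) of the underlying complete bipartite Whitehead graphs to cut down the number of independent vertex checks.
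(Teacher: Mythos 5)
Your overall strategy is the right one and is in fact the same as the paper's: the paper proves the proposition by exhibiting, for each $w_i$, an explicit side-pairing on a single polygonal disk (whose boundary reads $w_1^2$ for $w_1$, and $w_2$ itself for $w_2$), checking that the quotient $1$--skeleton immerses in $\cay(F)/F$, and computing $\chi(S_i)<1=m(S_i)$; Remark~\ref{rem:prop:w1w2} then records exactly the cycle-decomposition reading you describe ($W(w_1^2)$ as one $6$--cycle plus three $4$--cycles, $W(w_2)=K_{4,4}$ as four $4$--cycles). Your parity analysis --- $K_{3,3}$ is $3$--valent so $f_1(x)=x$ cannot work and one must pass to $2w_1$ or $w_1^2$, while $K_{4,4}$ is $4$--valent so $f_2(x)=x$ suffices, and simplicity of $K_{4,4}$ makes condition (iv) automatic --- is correct and matches the paper's choices.

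However, there is a genuine gap: you never produce the side-pairing (equivalently, the $\sigma$--compatible cycle decomposition), and as you yourself observe, condition (iii) is exactly where the content lies --- a decomposition of $K_{4,4}$ into cycles always exists, but one compatible with $\sigma_{w_2}$ need not. The proposition is precisely the assertion that this finite search succeeds for these two words, so a proof must exhibit the winning data and verify it; announcing that ``the verification is a finite case analysis'' and listing fallback options ($f_1(x)=2x$ versus $x^2$ versus larger multiples) does not discharge it. Two smaller cautions: your suggested decomposition of the doubled $K_{3,3}$ into ``a hexagon plus bigons'' is only a guess --- bigons from doubled edges still must satisfy (iii), and the decomposition that actually works in the paper contains no bigons --- and the $\mathbb{Z}/3$- or $\mathbb{Z}/4$-symmetry of the complete bipartite graph is not obviously a symmetry of the connecting map $\sigma_{w_i}$, which depends on the cyclic order of the length-$2$ subwords of $w_i$ and not just on the graph, so it cannot be invoked to cut down cases without justification. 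To complete the argument you would need to write down the pairing explicitly, e.g.\ the paper's $\{1,2\},\{3,7\},\{4,16\},\{5,15\},\{6,18\},\{8,11\},\{9,14\},\{10,17\},\{12,13\}$ on the $18$ edges of a disk reading $w_1^2$, and check the local injectivity at each of the four resulting vertices.
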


\begin{proof}
(1) 
Let $P_1$ be a polygonal disk whose boundary reads $w_1^2$. Name the edges of $\partial P_1$ as $1,2,\ldots, 18$ so that the edge named by $i$ corresponds to the $i$--th letter of $w_1^2$. Define a side-pairing $\sim_1$ on $P_1$ as shown in Fig.~\ref{fig:w1} (a).
That is to say, the edges of $\partial P_1$ are paired by $\sim_1$ as
\[
\{1,2\},
\{3,7\},
\{4,16\},
\{5,15\},
\{6,18\},
\{8,11\},
\{9,14\},
\{10,17\},
\{12,13\}.
\]

A neighborhood of each vertex in $S_1=P_1/\!\!\sim_1$ is illustrated in Fig.~\ref{fig:w1} (b). One sees that no two incoming edges (or two outgoing edges) of the same label exist at each vertex. Therefore, $S_1$ is a $w_1$--polygonal surface.
Since $\chi(S_1)=|S^{(0)}|-|S^{(1)}|+|S^{(2)}|= 4 - 18/2 + 1= -4 < 1$,
we conclude that $w_1$ is polygonal.

(2)
The proof that $w_2$ is polygonal is almost identical with (1) by considering a side-pairing $\sim_2$ on $P_2$, where $P_2$ is a polygonal disk whose boundary reads $w_2$; see Fig.~\ref{fig:w2} (a).
Specifically, $\sim_2$ identifies the edges of $\partial P_2$ as
\[
\{1,2\},
\{3,12\},
\{4,7\},
\{5,10\},
\{6,14\},
\{8,9\},
\{11,16\},
\{13,15\}.
\]

One again sees that $S_2=P_2/\!\!\sim_2$ is a $w_2$--polygonal surface, from the description of the links in Fig.~\ref{fig:w2} (b). Since $\chi(S_2)=4 - 16/2 + 1= -3 < 1$, $w_2$ is polygonal.
\end{proof}

\begin{rem}\label{rem:prop:w1w2}
(1)
Let $\Gamma$ be a graph immersed in $\cay(F)/F$.
Then $\Gamma$ embeds into $\cay(F)/F'$ for some $[F:F']<\infty$ such that $|\cay(F)/F'^{(0)}|=|\Gamma^{(0)}|$.
In particular, the degree of the cover $\cay(F)/F'\to\cay(F)/F$ can be chosen to be
 $|\Gamma^{(0)}|$; see~\cite{Stallings:1983p596}.
By the proof of Theorem~\ref{thm:polygonal}, we observe that
if $w\in F$ has a closed $w$--polygonal surface $S$,
then $X(w)$ has a finite cover of degree $|S^{(0)}|$
containing a closed surface $S'$ such that $\chi(S') =2 (\chi(S)-m(S))$.
From the proof of Proposition~\ref{prop:w1w2} and Fig.~\ref{fig:w1},
we see that $X(w_1)$ has a finite cover of degree $4$ that contains a closed surface of Euler characteristic $2(\chi(S_1)-1)=-10$. Similarly, a degree--$4$ cover of $X(w_2)$ contains a closed surface of Euler characteristic $2(\chi(S_2)-1)=-8$.

(2)
Let $\Gamma_1=W(w_1^2)$ and $\Gamma_2=W(w_2)$.
$S_1$ has one vertex of valence six and three vertices of valence four.
By the proof of Proposition~\ref{prop:ss} and~\ref{prop:p and ss}, 
$\Gamma_1$ can be written as the union of
one simple cycle of length six
and
three simple cycles of length four.
Also, $\Gamma_2$ is the union of four simple cycles of length four.
\end{rem}

\begin{figure}[htb!]
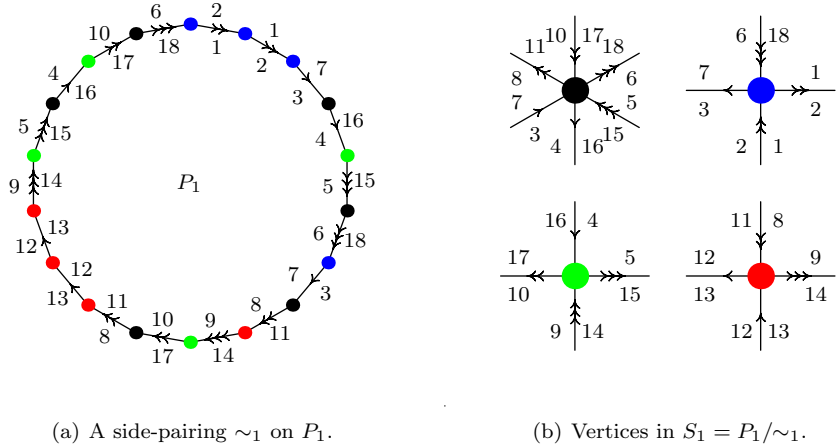

\subfigure[A side-pairing $\sim_1$ on $P_1$.]{
\includegraphics[]{fig/figw1.0}
}
\subfigure[Vertices in $S_1=P_1/\!\!\sim_1$.]{
\includegraphics[]{fig/figw1.1}
}
\caption{Proposition~\ref{prop:w1w2} (1).
Single, double and triple arrows denote $a$-, $b$-and $c$-edges, respectively.
\label{fig:w1}}
\end{figure}

\begin{figure}[htb!]
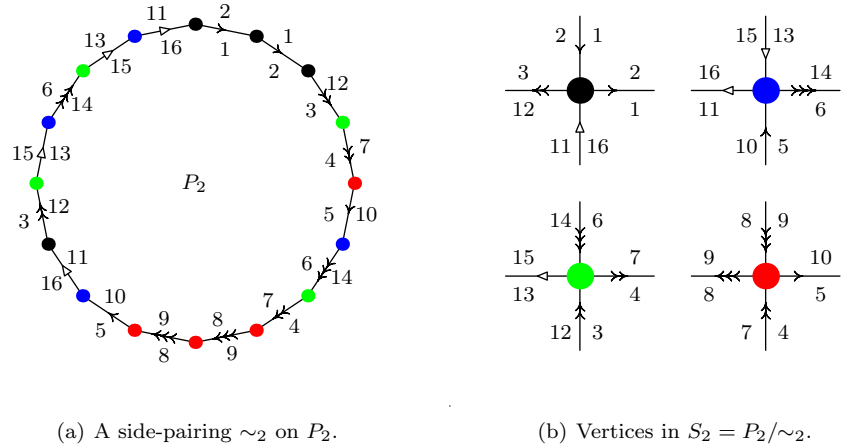

\subfigure[A side-pairing $\sim_2$ on $P_2$.]{
\includegraphics[]{fig/figw2.0}
}
\subfigure[Vertices in $S_2=P_2/\!\!\sim_2$.]{
\includegraphics[]{fig/figw2.1}
}
\caption{Proposition~\ref{prop:w1w2} (2).
Single, double, triple, and white arrows denote
$a$-, $b$-, $c$-and $d$-edges, respectively.
\label{fig:w2}}
\end{figure}

\textbf{Acknowledgement.} 
The author appreciates Cameron Gordon for inspirational conversations.
The author is grateful to Henry Wilton for helpful discussion on Section 2.1, 
and to Alan Reid for his guidance through this work.

\bibliographystyle{plain}
\bibliography{gpfree}

\begin{thebibliography}{10}

\bibitem{Berge:2009p3422}
J~Berge.
\newblock {H}eegaard documentation. {Preprint (c.1990)},
  \url{http://www.math.uic.edu/~t3m}.

\bibitem{Bestvina:2009p3862}
M~Bestvina.
\newblock Questions in geometric group theory.
  \url{http://www.math.utah.edu/∼bestvina}.

\bibitem{Bestvina:1992p456}
M~Bestvina and M~Feighn.
\newblock A combination theorem for negatively curved groups.
\newblock {\em J. Differential Geom.}, 35(1):85--101, 1992.

\bibitem{Calegari:2008p1810}
D~Calegari.
\newblock Surface subgroups from homology.
\newblock {\em Geometry {\&} Topology}, 12(4):1995--2007, 2008.

\bibitem{Canary:1993p4322}
R~D Canary.
\newblock Ends of hyperbolic 3--manifolds.
\newblock {\em J. Amer. Math. Soc.}, 6(1):1--35, 1993.

\bibitem{Gordon:2009p360}
C~McA Gordon and H~Wilton.
\newblock On surface subgroups of doubles of free groups. {Preprint},
  \href{http://arxiv.org/abs/0902.3693v1}{arXiv:0902.3693v1}.

\bibitem{Kim:2009p3867}
S~Kim and H~Wilton.
\newblock On polygonality of words in free groups. {Preprint},
  \href{http://arxiv.org/abs/0910.4709v1}{arXiv:0910.4709v1}.

\bibitem{Manning:2009p3177}
J~F Manning.
\newblock Virtually geometric words and {W}hitehead's algorithm. {Preprint},
  \href{http://arxiv.org/abs/0904.0724v2}{arXiv:0904.0724v2}.

\bibitem{Scott:1978p142}
P~Scott.
\newblock Subgroups of surface groups are almost geometric.
\newblock {\em J. London Math. Soc. (2)}, 17(3):555--565, 1978.

\bibitem{Stallings:1999p3173}
J~R Stallings.
\newblock {W}hitehead graphs on handlebodies. \textit{Geometric group theory
  down under (Canberra, 1996)}, 317--330, de {G}ruyter, {B}erlin, 1999.

\bibitem{Stallings:1983p596}
J~R Stallings.
\newblock Topology of finite graphs.
\newblock {\em Invent. Math.}, 71(3):551--565, 1983.

\bibitem{Stong:1997p4326}
R~Stong.
\newblock Diskbusting elements of the free group.
\newblock {\em Math. Res. Lett.}, 4(2-3):201--210, 1997.

\bibitem{Whitehead:1936p4475}
J~H~C Whitehead.
\newblock On certain sets of elements in a free group.
\newblock {\em Proceedings of the London Mathematical Society}, s 2-41(1):48,
  Jan 1936.

\bibitem{Wilton:2008p7013}
Henry Wilton.
\newblock Hall's theorem for limit groups.
\newblock {\em Geom. Funct. Anal.}, 18(1):271--303, 2008.

\bibitem{Wise:2000p790}
D~T Wise.
\newblock Subgroup separability of graphs of free groups with cyclic edge
  groups.
\newblock {\em Q. J. Math.}, 51(1):107--129, 2000.

\bibitem{Zieschang:1965p4314}
H~Zieschang.
\newblock Simple path systems on full pretzels. \textit{Mat. Sb. (N.S.)}, 66
  (108):230--239, 1965. {T}ranslated in \textit{AMS Translations (2)}, 92:
  127---137, 1970.

\end{thebibliography}

\end{document}